\newtheorem{theorem}{Theorem}[section]
\newtheorem{proposition}[theorem]{Proposition}
\newtheorem{corollary}[theorem]{Corollary}
\newtheorem{definition}[theorem]{Definition}
\begin{document}

\begin{frontmatter}

\title{On graphs with adjacency and signless Laplacian matrix eigenvectors entries in $\{-1, +1\}$}


\author[Jorgeaddress]{Jorge Alencar}
\ead{jorgealencar@iftm.edu.br}
\address[Jorgeaddress]{Instituto Federal de Educa\c{c}\~{a}o, Ci\^{e}ncia e Tecnologia do Tri\^angulo Mineiro, Brazil}

\author[Leoaddress1,Leoaddress2]{Leonardo  de Lima} \ead{leonardo.delima@ufpr.br}
\address[Leoaddress1]{Department of Business, Federal University of Parana, Paran\'a, Brazil}
\address[Leoaddress2]{Systems and Production Engineering Program, Federal Center of Technological Education Celso Suckow da Fonseca, Rio de Janeiro, Brazil}

\begin{abstract}
Let $G$ be a simple graph. In 1986, Herbert Wilf asked what  kind of graphs  has an eigenvector with entries formed only by $\pm 1$? In this paper, we answer this question for the adjacency, Laplacian and signless Laplacian matrix of a graph. Besides, we generalize the concept of an exact graph to the adjacency and signless Laplacian matrices. Infinity families of exact graphs for all those matrices are presented.
\end{abstract}

\begin{keyword}
eigenvector \sep  adjacency matrix \sep signless Laplacian matrix \sep Wilf's problem \sep exact graph.
\MSC[2010] 05C50, 
05C35 
\end{keyword}

\end{frontmatter}

\section{Introduction}\label{intro}

Let $G=(V,E,w)$ be a simple weighted graph with vertex set $V$ and edge set $E$ such that  $|V|=n$ and $|E|=m.$ Given a  subset $U \subseteq V$, let $G[U]$ be a subgraph induced by $V$ with edge set $E[U].$ A weight $w_{ij}>0$ is assigned to each edge $e_{ij} = v_i v_j \in E$ such that $w_{ij}=w_{ji}$. The degree of a vertex $v \in V$, denoted by $d_i(G)$, is given by the sum of the weights of the edges incident to $v_i$. If $G$ is a simple non-weighted graph, we take  $w_{ij}=w_{ji} = 1$ if $e_{ij} \in E$. We write $A(G)$ for the adjacency matrix of $G$ where $a_{ij}=w_{ij}$ if $e_{ij} \in E$ and $a_{ij}=0$ otherwise. The diagonal matrix $D(G)$ is given by the row-sums of $A,$ i.e., the degrees of $G.$ As usual, $L(G)=D(G) - A(G)$, is the Laplacian matrix of $G$, and $Q(G)=D(G)+A(G)$, is the signless Laplacian matrix of $G$. The $A(G)$, $L(G)$ and $Q(G)$ eigenvalues, called $A$-, $L$- and $Q$-eigenvalues, are arranged as follows: $\lambda_1(G) \geq \cdots \geq \lambda_n(G),$ $\mu_1(G) \geq \cdots \geq \mu_{n-1}(G) \geq \mu_n(G) = 0$ and $q_1(G) \geq \cdots \geq q_n(G).$ 

Consider a partition of $V$ into subsets $X$ and $\overline{X} = V \setminus X$ and let $E(X,\overline{X}) = \{e_{ij} | i \in X, j \in \overline{X} \}$, i.e., the set of edges with ending points in $X$ and $\overline{X}.$ The weight of $E(X,\overline{X})$, denoted by $cut_{G}(X)$, is given by the sum of the edge weights  in $E(X,\overline{X})$, that is, $$ cut_{G}(X) = \sum_{e_{ij} \in E(X,\overline{X})} w_{ij}.$$ In this paper, we address the well-known maximum cut problem (MCP) which consists of finding an optimal bipartition $\{X,\overline{X}\}$ that maximizes $cut_{G}(X)$ and is defined as 
$$mcut(G)=  \max_{X \subseteq V} cut_G(X)$$ and its relation to spectral parameters of a graph.
Some work has been done in the literature relating the $mcut(G)$ and the eigenvalues of a graph. Mohar and Poljak \cite{16} presented an upper bound on the maximum cut of a graph based on the largest eigenvalue of its Laplacian matrix, 
\begin{equation}\label{Eq1-Lmaxcut}
mcut (G) \leq \frac{n}{4} \mu_1(G).
\end{equation}
In \cite{16}, the authors defined an \emph{exact graph} as the ones whose equality (\ref{Eq1-Lmaxcut}) holds and also showed some families of this kind of graphs. Delorme and Poljak \cite{delorme} also presented an upper bound to the maximum cut problem in terms of the Laplacian matrix plus a correcting matrix and presented some classes of exact graphs. Computing Delorme and Poljak's upper bound is equivalent to solving an eigenvalue minimization problem and it is computable in polynomial time with arbitrary precision. 
We begin the paper by presenting some bounds to $mcut(G)$ involving the smallest eigenvalues of the matrices $A(G)$ and $Q(G)$ which show a relation among those eigenvalues and the $mcut(G)$, which is analogous to the relation between the largest eigenvalue of $L(G)$ and $mcut(G)$ established in \cite{16}. Based on that, we generalize the concept of a graph exactness, which originally is related to a Laplacian eigenvalue of the graph. We extend it here by defining $A$-exact and  $Q$-exact graphs when our parameter is related to the adjacency or to the signless Laplacian matrix respectively. Infinity families of graphs $A$-, $L$- and $Q$-exact graphs are presented. Besides, we stated necessary and sufficient conditions for a graph have an eigenvector uniquely formed by entries $\pm 1.$ This result answers an intricate Wilf's question posed in \cite{wilf}, where the author asks what kind of graphs has an eigenvector uniquely formed by entries $\pm 1$ and is one of our main contribution. Stevanovi\'{c} \cite{dragan2016} mentioned that the set of graphs having the property of an eigenvector with entries $\pm 1$ is quite rich and proved that Wilf's problem is NP-Complete. Very recently Caputo, Khames and  Knippel \cite{caputo2019} extended the Wilf's question to the graph Laplacian eigenvector with entries $\pm 1$ and solved it. We propose an alternative proof to Caputo, Khames and  Knippel's proof. Also, we found a characterization of all graphs having a signless Laplacian graph eigenvector with entries $\pm 1.$ 

The paper is organized as follows. In Section \ref{sec1}, we show that the maximum cut of a graph is bounded above by the A- and Q-spread of a graph. In Section \ref{MainResult}, we present a graph characterization for an eigenvector of matrices $A$ and $Q$ having entries $\pm 1.$ Also, a generalization of the concept of an exact graph is presented. In Section \ref{infinityfamilies}, we present infinity families of $A$- and $Q$-exact graphs and conclude the paper.

\section{The maximum cut and the spread of a graph}\label{sec1}
Given a bipartition $X, Y\subseteq  V$  and $E(X,Y)$ as the set of all edges with ending points in $X$ and $Y$, the \textit{cohesion} of its partition is given by  
$$coh_G(X) = \sum _{e_{ij} \in \overline{E(X,Y)}} w_{ij}.$$ 
Observe that given a graph $G$, maximizing $cut_G(X)$ is equivalent to minimize $coh_G(X)$ since $coh_G(X)+cut_G(X)=W$, where $W = \sum_{e_{ij} \in E}w_{ij}.$ 
Thus, we define the minimum cohesion by
$$mcoh(G)=\min _{X\subseteq V}coh_G(X)$$
and immediately obtain the result of Proposition \ref{Prop1}.

\begin{proposition}\label{Prop1} Let $U$ be a subset of $V$, then $U$ maximizes $cut_G(U)$ if and only if minimizes the $coh_G(U)$.
\end{proposition}
%
From the signless Laplacian theory, or $Q-theory$, a basic result states that for $x \ne 0 \in R^{n}:$
\begin{equation}\label{sl}
\langle Q(G)x, x \rangle = \sum_{ij \in E} w_{ij}(x_i+x_j)^2.
\end{equation}
Let $S \subset V$. We define a partition vector $p_S \in \mathcal {R}^n$ such that 
$$p_S^{i}=\left\lbrace \begin{array}{l}
1,\hspace{0.2cm} v_i \in S\\
-1,\hspace{0.2cm} v_i \notin S.
\end{array}\right. $$

Thus, from the partition vector $p_S$ and equation (\ref{sl}), since $p^{i}_S + p^{j}_S$ is equal to 2   
whenever $ij \in E[S]$ and $0$ otherwise, we get
\begin{equation}\label{sl2}
\langle Q(G)p_S, \;p_S \rangle = \sum_{ij \in E} w_{ij}(p^{i}_S+p^{j}_S)^2 = 4 \; coh_G(S).
\end{equation}

Using the latter facts, we easily get an upper bound to $mcut(G)$ of a weighted graph, the same result obtained by de Lima \emph{et al.} (2011) in \cite{nik}. 

\begin{proposition}\label{Prop3} Let $G=(V,E)$ be a graph with $n$ vertices, $W$ be the sum of all edge weights of $G$ and $q_n(G)$ be the smallest eigenvalue of $Q$. Then,
\begin{equation}\label{Prop3ineq}
mcut(G) \leq W - \frac{nq_n(G)}{4}.
\end{equation}  
\end{proposition}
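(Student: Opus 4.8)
The plan is to combine the identity in equation (\ref{sl2}) with the Rayleigh-quotient characterization of the smallest eigenvalue $q_n(G)$. The whole argument is essentially a one-line estimate once the right test vector is chosen, and the natural choice is exactly the partition vector $p_S$ introduced above, whose entries are all $\pm 1$.

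First I would fix an arbitrary subset $S \subseteq V$ and consider its partition vector $p_S$. Since every entry of $p_S$ is $\pm 1$, its squared norm is $\langle p_S, p_S \rangle = n$. By the variational (Rayleigh quotient) characterization of the smallest eigenvalue of the symmetric matrix $Q(G)$, we have $\langle Q(G)x, x \rangle \geq q_n(G)\,\langle x, x \rangle$ for every $x \neq 0$; applying this with $x = p_S$ gives
\begin{equation*}
\langle Q(G)p_S,\; p_S \rangle \;\geq\; q_n(G)\,\langle p_S, p_S \rangle \;=\; n\,q_n(G).
\end{equation*}

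Next I would substitute equation (\ref{sl2}), namely $\langle Q(G)p_S, p_S \rangle = 4\,coh_G(S)$, into the left-hand side, which yields $4\,coh_G(S) \geq n\,q_n(G)$, i.e. $coh_G(S) \geq \frac{n q_n(G)}{4}$. Then, using the complementary relation $coh_G(S) + cut_G(S) = W$ established before Proposition \ref{Prop1}, I would rewrite this as
\begin{equation*}
cut_G(S) \;=\; W - coh_G(S) \;\leq\; W - \frac{n\,q_n(G)}{4}.
\end{equation*}
Since $S$ was arbitrary, taking the maximum over all $S \subseteq V$ on the left gives $mcut(G) \leq W - \frac{n q_n(G)}{4}$, which is the desired inequality (\ref{Prop3ineq}).

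I do not expect a genuine obstacle here, as the estimate is a direct Rayleigh-quotient bound; the only points requiring a small amount of care are recognizing that $\langle p_S, p_S \rangle = n$ for a $\pm 1$ vector and that the inequality must be passed through the maximum over subsets. The slightly subtle conceptual ingredient—already done for us in (\ref{sl2})—is the translation from the quadratic form of $Q(G)$ into the cohesion $coh_G(S)$; everything after that is routine.
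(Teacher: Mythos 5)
Your proof is correct and follows essentially the same route as the paper: both use the Rayleigh-quotient characterization of $q_n(G)$ evaluated at partition vectors $p_S$, the identity $\langle Q(G)p_S, p_S\rangle = 4\,coh_G(S)$, and the complementarity $coh_G(S) + cut_G(S) = W$. The only cosmetic difference is that you fix an arbitrary $S$ and pass to the maximum at the end, whereas the paper carries the minimum over $S$ inside the chain of inequalities.
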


\begin{proof}Let $p_S$ be the partition vector related to $S\subset V$ and let $\mathbf{x}$ be an eigenvector to $q_{min}(G).$  Then, from the Rayleigh principle we have
\begin{eqnarray}
q _n (G) &=& \min _{x\neq 0}\frac{x^tQx}{x^tx} \leq \min _{S\subseteq V}\frac{p_S^tQp_S}{n} = \frac{1}{n} \min_{S \subseteq V}\sum_{ij \in E(G)} w_{ij}(p^i_{S} + p^j_{S})^2 \nonumber \\ 
&\leq & \frac{4}{n}mcoh(G) = \frac{4}{n}(W- mcut(G)) \nonumber
\end{eqnarray}
and the result follows.
\end{proof}

A similar relation can be obtained for the smallest eigenvalue of the adjacency matrix of $G.$ 

\begin{proposition}\label{Prop2}Let $G=(V,E)$ be a weighted graph on $n$ vertices, $W$ be the sum of weights of all edges of $G$ and $\lambda_n(G)$ be the smallest eigenvalue of $A$. Then,
\begin{equation}
mcut(G) \leq \frac{W}{2} - \frac{n \lambda_{n}(G)}{4}.
\label{Prop2ineq}
\end{equation}
\end{proposition}
\begin{proof} Let $p_S$ be a partition vector of the set $S\subset V$, then for the Rayleigh-Ritz theorem we have
\begin{eqnarray}
\lambda _n(G) & = & \min_{x\neq 0}\frac{x^tAx}{x^tx} \leq \min_{S\subseteq V} \frac{p_S^{T} A p_S}{n}  = \min_{S\subseteq V} \left( \frac{p_S^T Q p_S}{n}-\frac{p_S^T D p_S}{n} \right) \nonumber \\ 
		       & = & \min_{S\subseteq V} \left( \frac{p_S^tQp_S}{n}-\frac{2}{n} W \right) = \min_{S\subseteq V} \left( \frac{p_S^tQp_S}{n} \right) -\frac{2}{n}W \nonumber \\
		       & = & \frac{4}{n}mcoh(G)  -\frac{2}{n}W \nonumber \\
		       & = & \frac{2}{n} \left(W-2 mcut(G)\right) \nonumber
\end{eqnarray}
and the inequality follows.
\end{proof}

The bounds (\ref{Prop3ineq}) and (\ref{Prop2ineq}) are the best possible and graphs attaining their equalities are here called $Q-$exact and $A-$exact, respectively. It is clear that complete graphs of even order and bipartite graphs are $Q-$ exact. While describing all equality cases seems to be a difficult task, we present infinity families of $A$- and $Q$-exact graphs in Section \ref{infinityfamilies}. 

The spread of a Hermitian matrix $M$ with eigenvalues $\lambda_1(M) \geq \cdots \geq \lambda_{n}(M)$ is defined as $s(M) = \lambda_{1}(M)-\lambda_{n}(M).$ In particular, the spread of the matrices $A, L$ and $Q$ related to graphs are defined, respectively, by 
$$s_{A}(G) = \lambda_{1}(G)-\lambda_{n}(G),$$ $$s_{L}(G) = \mu_{1}(G) $$ and $$s_{Q}(G) = q_{1}(G)-q_{n}(G).$$  

Since $\lambda_{1}(G) \geq 2W/n$ and $q_{1}(G) \geq 4W/n$, from Propositions \ref{Prop1} and \ref{Prop2} the next result is straightforward. 

\begin{corollary} \label{Teorema1} Let $G=(V,E)$ be a weighted graph on $n$ vertices and let $N$ be either the adjacency, $A(G)$, or the signless Laplacian, $Q(G)$, of $G$. Then,
\begin{equation}\label{Eq2-Nmaxcut}
 mcut (G) \leq \frac{n}{4} s_{N}(G).
\end{equation}

\end{corollary}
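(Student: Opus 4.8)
The plan is to treat the two choices of $N$ separately and in each case reduce the claim to the corresponding bound already established. For $N = Q(G)$, Proposition \ref{Prop3} gives $mcut(G) \le W - \frac{n q_n(G)}{4}$, so it would suffice to show this quantity is at most $\frac{n}{4}\bigl(q_1(G) - q_n(G)\bigr)$; cancelling the common term $-\frac{n}{4}q_n(G)$ from both sides collapses the inequality to $W \le \frac{n}{4} q_1(G)$, i.e. to the single claim $q_1(G) \ge 4W/n$. Analogously, for $N = A(G)$, Proposition \ref{Prop2} gives $mcut(G) \le \frac{W}{2} - \frac{n \lambda_n(G)}{4}$, and after cancelling $-\frac{n}{4}\lambda_n(G)$ the desired inequality reduces to $\frac{W}{2} \le \frac{n}{4}\lambda_1(G)$, i.e. to $\lambda_1(G) \ge 2W/n$.

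Thus the only substantive step is to verify these two lower bounds on the largest eigenvalues, and I would obtain both from the Rayleigh principle by testing against the all-ones vector $\mathbf{1}$. For the signless Laplacian, identity (\ref{sl}) with $x = \mathbf{1}$ yields $\langle Q(G)\mathbf{1}, \mathbf{1}\rangle = \sum_{ij \in E} w_{ij}(1+1)^2 = 4W$, so that $q_1(G) = \max_{x \ne 0} \frac{x^t Q x}{x^t x} \ge \frac{4W}{n}$, which is exactly the inequality advertised in the text. For the adjacency matrix, the handshake identity gives $\mathbf{1}^t A \mathbf{1} = \sum_i \sum_j a_{ij} = \sum_i d_i = 2W$, whence $\lambda_1(G) \ge \frac{2W}{n}$ by the same Rayleigh argument.

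With these two inequalities in hand, the corollary follows by the algebraic reductions above. I do not expect any real obstacle here: the content is entirely carried by Propositions \ref{Prop2} and \ref{Prop3}, and the role of the spread is simply to absorb the constant term $W$ (respectively $W/2$) into $\frac{n}{4}\lambda_1(G)$ (respectively $\frac{n}{4}q_1(G)$). The only point deserving a line of justification — the closest thing to a hard part — is the choice of $\mathbf{1}$ as test vector, which is natural precisely because it makes every edge contribution $(x_i + x_j)^2$ equal to $4$ in the $Q$-form and feeds directly into $\sum_i d_i = 2W$ in the $A$-form.
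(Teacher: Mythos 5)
Your proof is correct and follows essentially the same route as the paper, which derives the corollary from Propositions \ref{Prop3} and \ref{Prop2} together with the stated facts $q_1(G) \geq 4W/n$ and $\lambda_1(G) \geq 2W/n$. The only difference is that you also verify those two eigenvalue lower bounds via the Rayleigh quotient with the all-ones vector, a detail the paper asserts without proof.
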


%

Since inequalities (\ref{Eq1-Lmaxcut}) and (\ref{Eq2-Nmaxcut}) have the same lower bound for any graph $G$ one may think about the relation between the spreads of the matrices $A, L$ and $Q.$ In order to find those relations, we use two basic results: one from Merikoski and Kumar \cite{15}, where the authors proved that for any two Hermitian matrices of the same order, say $M$ and $N$, $s(M+N)\leq s(M) + s(N);$ the second is the well-known Weyl's inequality presented here with their equality conditions in Theorem \ref{th:weyl}.




\begin{theorem}\label{th:weyl} 
Let $A$ and $B$ be Hermitian matrices of order
$n,$ and let $1\leq i\leq n$ and $1\leq j\leq n.$ Then
\begin{eqnarray}
\lambda_{i}(A)+\lambda_{j}(B)  &  \leq\lambda_{i+j-n}(A+B), \mbox{ if  } i+j\geq
n+1,\label{Wein1}\\
\lambda_{i}(A)+\lambda_{j}(B)  &  \geq\lambda_{i+j-1}(A+B),\mbox{ if  }i+j\leq
n+1. \label{Wein2}%
\end{eqnarray}
In either of these inequalities equality holds if and only if there exists a
nonzero $n$-vector that is an eigenvector to each of the three involved eigenvalues.
\end{theorem}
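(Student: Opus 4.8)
The plan is to derive both inequalities from the Courant--Fischer min--max characterization of Hermitian eigenvalues together with an elementary dimension count, and then to extract the common eigenvector in the equality case from a three-subspace intersection argument. Throughout I write $\langle\cdot,\cdot\rangle$ for the standard inner product and rely on the following elementary fact, which will be the workhorse for the equality analysis: if $M$ is Hermitian of order $n$ and $S$ is a subspace spanned by eigenvectors of $M$ whose eigenvalues are all at least $\mu$, then $\langle Mx,x\rangle\ge\mu\|x\|^2$ for every $x\in S$, with equality exactly when $x$ lies in the $\mu$-eigenspace of $M$; the analogous statement with ``$\le$'' holds when the eigenvalues are all at most $\mu$. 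This follows at once by expanding $x$ in the relevant eigenbasis.

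First I would prove (\ref{Wein1}). Set $k=i+j-n\ge 1$. Let $S_A$ be the span of the eigenvectors of $A$ for its $i$ largest eigenvalues, so $\dim S_A=i$ and $\langle Ax,x\rangle\ge\lambda_i(A)\|x\|^2$ on $S_A$; let $S_B$ be the analogous span for $B$, so $\dim S_B=j$ and $\langle Bx,x\rangle\ge\lambda_j(B)\|x\|^2$ on $S_B$. Then $\dim(S_A\cap S_B)\ge\dim S_A+\dim S_B-n=i+j-n=k$, and for every nonzero $x\in S_A\cap S_B$ we have $\langle(A+B)x,x\rangle\ge(\lambda_i(A)+\lambda_j(B))\|x\|^2$. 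Choosing any $k$-dimensional $S\subseteq S_A\cap S_B$, the min--max formula gives $\lambda_{k}(A+B)\ge\min_{0\ne x\in S}\langle(A+B)x,x\rangle/\|x\|^2\ge\lambda_i(A)+\lambda_j(B)$. Inequality (\ref{Wein2}) then follows by applying (\ref{Wein1}) to $-A$ and $-B$ and using $\lambda_\ell(-M)=-\lambda_{n-\ell+1}(M)$ to re-index: this passage turns the hypothesis $i+j\ge n+1$ into $i+j\le n+1$ and sends the index $i+j-n$ to $i+j-1$.

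For the equality conditions the ``if'' direction is immediate: if a nonzero $v$ satisfies $Av=\lambda_i(A)v$ and $Bv=\lambda_j(B)v$, then $(A+B)v=(\lambda_i(A)+\lambda_j(B))v$, and since $v$ is assumed to be an eigenvector of $A+B$ for $\lambda_{i+j-n}(A+B)$, that eigenvalue must equal $\lambda_i(A)+\lambda_j(B)$. For the ``only if'' direction (I treat (\ref{Wein1}); the case (\ref{Wein2}) transfers again via $M\mapsto -M$), assume $\lambda_i(A)+\lambda_j(B)=\lambda_k(A+B)$ with $k=i+j-n$. Introduce a third subspace $S_{A+B}$, spanned by the eigenvectors of $A+B$ for its smallest $n-k+1$ eigenvalues, on which $\langle(A+B)x,x\rangle\le\lambda_k(A+B)\|x\|^2$. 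Now $\dim S_A+\dim S_B+\dim S_{A+B}=i+j+(n-k+1)=2n+1$, so the triple intersection has dimension at least $2n+1-2n=1$ and contains a nonzero vector $v$. Chaining the three estimates against the assumed equality forces $\langle Av,v\rangle=\lambda_i(A)\|v\|^2$, $\langle Bv,v\rangle=\lambda_j(B)\|v\|^2$, and $\langle(A+B)v,v\rangle=\lambda_k(A+B)\|v\|^2$ simultaneously, whereupon the workhorse fact makes $v$ a genuine eigenvector for each of the three eigenvalues.

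The main obstacle is the ``only if'' direction. The inequalities themselves are a routine min--max estimate plus a dimension count, but producing a single vector that is simultaneously an eigenvector of $A$, $B$ and $A+B$ requires both the sharper three-subspace intersection bound (to guarantee a common vector survives) and the boundary-attainment lemma (to upgrade ``the Rayleigh quotient attains its extreme value'' into ``$v$ is an eigenvector''). I would take special care with the index bookkeeping in the $M\mapsto -M$ reduction, since that is where sign and reindexing errors are easiest to commit.
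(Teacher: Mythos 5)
Your proof is correct, but there is nothing in the paper to compare it against: the paper states Theorem \ref{th:weyl} without proof, presenting it as the ``well-known Weyl's inequality'' together with its equality conditions (the equality characterization is a known result of So, ``Commutativity and spectra of Hermitian matrices,'' which appears in the paper's reference list), and then uses it as a black box in the proof of Theorem \ref{Prop7}. On its own merits, your argument is the standard, complete one. The inequality (\ref{Wein1}) follows exactly as you say from Courant--Fischer plus the two-subspace dimension count $\dim(S_A\cap S_B)\ge i+j-n$; your reduction $M\mapsto -M$ with $\lambda_\ell(-M)=-\lambda_{n-\ell+1}(M)$ is re-indexed correctly (the hypothesis $i'+j'\ge n+1$ becomes $i+j\le n+1$ and the index $i'+j'-n$ becomes $i+j-1$); and the equality case is handled soundly: the three-subspace bound $\dim(S_A\cap S_B\cap S_{A+B})\ge i+j+(n-k+1)-2n=1$ with $k=i+j-n$ produces a common vector $v$, the chain of Rayleigh-quotient estimates collapses to equalities, and your ``workhorse'' lemma upgrades each equality to a genuine eigen-equation, which is precisely the three-eigenvector condition in the statement. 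One detail worth making explicit if you write this up: in the workhorse lemma, the span $S$ of eigenvectors of $M$ with eigenvalues $\ge\mu$ is $M$-invariant and hence admits an orthonormal basis of eigenvectors of $M$, which is what justifies the expansion and the identification of the equality case with membership in the $\mu$-eigenspace.
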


Now, we are able to prove a relation between the spreads.

\begin{theorem}\label{Prop7} Let $G$ be a connected graph. Then, 
\begin{equation}
2s_{A}(G)\leq s_{L}(G)+s_{Q}(G).\label{Prop6ine}
\end{equation}
Equality holds if and only if $G$ is regular.
\end{theorem}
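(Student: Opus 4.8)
The plan is to realize $A(G)$ as a difference of the two Laplacians and then apply Weyl's inequality (Theorem \ref{th:weyl}) separately at the top and at the bottom of the spectrum. Since $Q(G) = D(G) + A(G)$ and $L(G) = D(G) - A(G)$, we have the identity $Q(G) + (-L(G)) = 2A(G)$. The eigenvalues of $-L(G)$ are the negatives of those of $L(G)$, and because $\mu_n(G) = 0$ we get $\lambda_1(-L) = 0$ and $\lambda_n(-L) = -\mu_1(G)$, so that $s(-L) = \mu_1(G) = s_L(G)$; meanwhile $\lambda_i(2A) = 2\lambda_i(A)$. This reduces everything to an eigenvalue comparison for the sum $Q + (-L)$.

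First I would bound the largest eigenvalue: applying (\ref{Wein2}) with $i=j=1$ to the pair $Q,\,-L$ gives $2\lambda_1(A) = \lambda_1(2A) \leq \lambda_1(Q) + \lambda_1(-L) = q_1(G)$. Next I would bound the smallest eigenvalue: applying (\ref{Wein1}) with $i=j=n$ gives $2\lambda_n(A) = \lambda_n(2A) \geq \lambda_n(Q) + \lambda_n(-L) = q_n(G) - \mu_1(G)$. Subtracting the second estimate from the first yields $2s_A(G) = 2\lambda_1(A) - 2\lambda_n(A) \leq \big(q_1(G) - q_n(G)\big) + \mu_1(G) = s_Q(G) + s_L(G)$, which is exactly (\ref{Prop6ine}). (Equivalently, this is the Merikoski--Kumar subadditivity $s(2A) \leq s(Q) + s(-L)$, but I prefer the two explicit Weyl bounds because they carry the equality clause needed below.)

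For the equality case, the key observation is that equality in (\ref{Prop6ine}) forces equality in \emph{both} Weyl estimates simultaneously, since each contributes a nonnegative slack to the chain and their sum must vanish. I would then invoke the equality clause of Theorem \ref{th:weyl} for the top estimate: there must exist a nonzero common eigenvector $u$ for $\lambda_1(Q)$, for $\lambda_1(-L)$, and for $\lambda_1(2A)$. Since $\lambda_1(-L) = 0$ and $G$ is connected, $u$ lies in the kernel of $L(G)$, which is one-dimensional and spanned by the all-ones vector $\mathbf{1}$; hence $u = \mathbf{1}$ up to scaling. The relation $2A(G)\mathbf{1} = 2\lambda_1(A)\mathbf{1}$ then states that the vector of vertex degrees is constant, i.e.\ $G$ is regular. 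Conversely, for a $d$-regular graph one has $A(G)\mathbf{1} = d\mathbf{1}$, $L(G) = dI - A(G)$ and $Q(G) = dI + A(G)$, from which a direct computation gives $s_L(G) = s_Q(G) = d - \lambda_n(A) = s_A(G)$, so $s_L(G) + s_Q(G) = 2 s_A(G)$ and equality holds.

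The main obstacle is essentially bookkeeping: choosing the correct indices $i=j=1$ and $i=j=n$ in the two forms of Weyl's inequality, and tracking how negating $L(G)$ reverses and relabels its spectrum. The conceptual heart is short once the Weyl equality clause is available, namely that connectedness pins the shared top eigenvector down to $\mathbf{1}$ and thereby forces $G$ to be regular.
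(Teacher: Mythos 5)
Your proof is correct, and while the inequality part follows the paper's path (both rest on the decomposition $2A = Q - L$; your two explicit Weyl bounds are exactly what underlies the Merikoski--Kumar subadditivity $s(Q-L)\leq s(Q)+s(-L)$ that the paper cites), your treatment of the equality case takes a genuinely different and arguably cleaner route. The paper argues by contradiction: assuming equality with $G$ non-regular, it invokes an external result of Yan (cited as \cite{chen}) that $q_1(G) \geq 2\lambda_1(G)$ is strict for connected non-regular graphs, combines this with the lower Weyl bound $2\lambda_n(G) \geq q_n(G) - \mu_1(G)$, and derives $s_Q + s_L > 2s_A = s_Q + s_L$. You instead observe that equality forces both Weyl slacks to vanish and then use the equality clause built into Theorem \ref{th:weyl} itself: the top estimate must admit a common eigenvector $u$ for $q_1(Q)$, for $\lambda_1(-L)=0$, and for $2\lambda_1(A)$; connectedness makes $\ker L$ one-dimensional, so $u$ is the all-ones vector, and $A\mathbf{1} = \lambda_1\mathbf{1}$ reads off regularity directly. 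Your argument is self-contained (no appeal to \cite{chen}), constructive rather than by contradiction, and makes transparent exactly where connectedness enters --- which matters, since the paper itself remarks that the equality case fails to characterize regularity for disconnected graphs such as $K_4 + C_4$; in your framework that example is explained by $\ker L$ having dimension two, so the common eigenvector need not be $\mathbf{1}$. The paper's route, in exchange, isolates the spectral inequality $q_1 \geq 2\lambda_1$ as the obstruction, which is a statement of independent interest.
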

\begin{proof}
Using Merikoski and Kumar's result for the matrix $2A=Q-L$, we find that  $2s_{A}(G)\leq s_{L}(G)+s_{Q}(G)$, since $s_{L}(G) = s_{-L}(G).$
Now, we need to prove the equality case. Suppose that $2s(A)=s(L)+s(Q)$ and $G$ is non-regular. In \cite{chen} the author states that the equality cases of
$q_1 (G) \geq 2 \lambda_1(G)$
are restricted to regular graphs. By hypothesis, $G$ is non-regular so $q_1(G)> 2 \lambda_1(G)$. Besides, using inequality (\ref{Wein1}) to $2A=Q-L$ we obtain 
$$2\lambda_n(G) \geq q_n(G)-\mu_1(G).$$ 
Taking the difference of the latter inequalities we obtain $$s_Q(G)+s_L(G) > 2s_A(G)= s_Q(G)+s_L(G),$$
which is a contradiction. Therefore, $G$ is regular. 
Now, suppose that $G$ is $r-$regular. It is known that $\lambda_{i}(G) = q_{i}(G) - r = r - \mu_{i}$ for $i=1,\ldots,n$ and the result follows.
\end{proof}

Notice that if $G$ is disconnected, the equality case of Proposition \ref{Prop7} holds for non-regular graphs, see for instance when $G = K_{4}+C_{4}.$

\section{Main results}\label{MainResult}

In \cite{16},  Mohar and Poljak define an exact graph as the ones for which  $\mu_1(G)=\frac{4}{n}mcut(G)$ and proved the following facts: (i) the cartesian product of $L$-exact graphs is a $L$-exact graph; (ii) the complement of the cartesian product of complete graphs $K_n$ and $K_m$ is a $L$-exact graph, if $n\leq m$ and $n$ is even; (iii) bipartite regular graphs are $L$-exact graphs; (iv) line graphs of bipartite graphs and their complements are $L$-exact graphs; (v) line graph of bipartite $(r,s)$-semiregular graph is a $L$-exact graph if $s$ and $r$ are even. Taking into account the inequalities of Propositions \ref{Prop3} and \ref{Prop2}, we introduce an extension of the exact graph definition by considering the adjacency and the signless Laplacian matrices. 

\begin{definition} \textit{We say that a graph $G$ is a:
\begin{enumerate}[label=\bfseries (\alph*)]
\item $L$-exact graph if and only if $\mu_1(G)=\frac{4}{n}mcut(G)$;
\item $Q$-exact graph if and only if $q_n(G)=\frac{4}{n}(W-mcut(G))$;
\item $A$-exact graph if and only if $\lambda _n(G) = \frac{2}{n}(W-2 mcut(G))$.
\end{enumerate}}
\end{definition}

Notice that if $G$ is $Q-$exact or $A-$exact, then equality holds in (\ref{Eq2-Nmaxcut}) if and only if $G$ is regular. Also, if $G$ is $r-$regular and $L-$exact, we immediately get that $G$ is $Q-$ and $A-$exact since $\mu_1 = r-\lambda_n$ and $\mu_1 = 2r - q_{n}.$ However, the reciprocal is not true. See for instance, the graphs of the Figure \ref{Figure1}. The graph of Figure \ref{Figure1}(a) is $A-$exact but it is neither $Q$- nor $L$-exact. The graph of Figure \ref{Figure1}(b) is $Q$-exact, but it is neither $A-$ nor $L-$exact.

\begin{figure}[h]
\begin{minipage}[c]{0.45\linewidth}
      \centering
      \includegraphics[width=\textwidth]{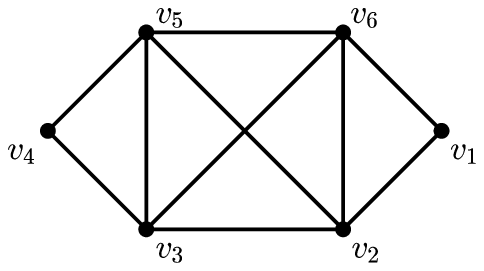}
     (a) $A-$exact graph
\end{minipage}
\begin{minipage}[c]{0.45\linewidth}
\centering
      \includegraphics[width=\textwidth]{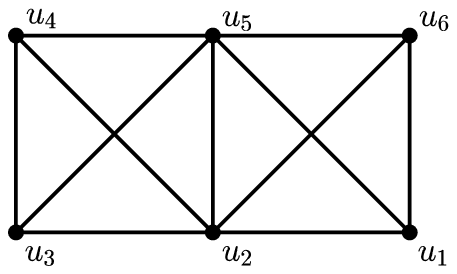}
      (b) $Q-$exact graph
\end{minipage}
\caption{The graphs in (a) and (b) are exclusively $A-$ and $Q-$exact, respectively.}
\label{Figure1}
\end{figure}

Next, Theorem \ref{Prop14} states necessary and sufficient conditions to a graph to have a vector partition $p_{S}$ for a given partition $S$ and $V\setminus S$ of a graph $G$ such that $|S|=n_1$ and $|\overline{S}| = |V \setminus S| = n_2.$ With this result, it is possible to build  infinity families of exact graphs. 

At this point, let us introduce some notation:  $e_{n}$ is the all ones $n-$vector; $A_{X} = A(G[X])$ is the adjacency matrix of the induced subgraph $G[X]$, where $X \subseteq V(G)$; $G[E(X,Y)]$ is the edge-induced subgraph by the edges with endpoints in the vertex sets $X$ and $Y;$ $D_{X}$ is the diagonal matrix of the vertices degrees of $G$ for all vertices of set $X$, i.e., $(D_{X})_{i} = d_{i}(G)$ for every $v_{i} \in X;$  $d_{i}(X,\overline{X})$ is the number of edges from vertex $v_i$ to all vertices of set $\overline{X}.$ The following result states necessary and sufficient conditions to a partition vector be an eigenvector of the matrices $L$ and $Q$ of a given graph and it was very useful to identify $Q$- and $L$-exact graphs. Also, this answers a question posed by Wilf in \cite{wilf}: ``what kind of graphs have eigenvector with entries $\pm 1$ 
only?'' We were able to answer this question for matrices $A, L$ and $Q$ of graphs in Theorem \ref{Prop14}. Notice that in case (ii) we propose an alternative proof to the one presented by \cite{caputo2019} in Theorem 13.

\begin{theorem}\label{Prop14} Let $G$ be a graph on $n$ vertices and let $p_S$ the partition vector associated to $S\subset V.$ Then, 
\begin{enumerate}
\item \label{Prop14-1} $p_S$ is an eigenvector of $Q$ associated to the eigenvalue $q$ if and only if $G[S]$ and $G[\overline{S}]$ are both  $\frac{q}{2}$-regular;
\item \label{Prop14-2} $p_S$ is a eigenvector of $L$ associated to the eigenvalue $\mu$ if and only if $G[E(S,\bar{S})]$ is $\frac{\mu}{2}$-regular;
\item \label{Prop14-3} $p_S$ is a eigenvector of $A$ associated to the eigenvalue $\lambda$ if and only if 
$d_{i}(S)-d_{i}(S, \overline{S}) =\lambda$ for all vertex $v_{i} \in V.$ 
\end{enumerate}
\end{theorem}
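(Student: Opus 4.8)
The plan is to prove each of the three characterizations by directly computing the matrix-vector product applied to the partition vector $p_S$ and comparing entries.

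For the signless Laplacian case (part \ref{Prop14-1}), I would compute the $i$-th entry of $Q p_S = (D+A)p_S$. For a vertex $v_i \in S$, the degree term contributes $d_i(G)$, while the adjacency term $\sum_j a_{ij} p_S^j$ splits into neighbours inside $S$ (contributing $+1$ each, so $+d_i(S)$) and neighbours in $\overline{S}$ (contributing $-1$ each, so $-d_i(S,\overline{S})$). Since $d_i(G)=d_i(S)+d_i(S,\overline{S})$, the entry becomes $d_i(S)+d_i(S,\overline{S})+d_i(S)-d_i(S,\overline{S}) = 2d_i(S)$. For $p_S$ to be an eigenvector with eigenvalue $q$, the $i$-th entry of $Qp_S$ must equal $q\,p_S^i = q$ for $v_i\in S$, forcing $2d_i(S)=q$, i.e.\ $d_i(S)=q/2$ for all $v_i\in S$. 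The symmetric computation on $\overline{S}$ (where $p_S^i=-1$) yields $d_i(\overline{S})=q/2$, and the equivalence with both induced subgraphs being $\tfrac{q}{2}$-regular follows immediately in both directions.

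For the adjacency case (part \ref{Prop14-3}), the same splitting of neighbours gives $(Ap_S)_i = d_i(S)-d_i(S,\overline{S})$ for $v_i\in S$, which must equal $\lambda\,p_S^i=\lambda$; and for $v_i\in\overline{S}$ one gets $(Ap_S)_i = d_i(S,\overline{S})-d_i(\overline{S})$, which must equal $-\lambda$, i.e.\ $d_i(\overline{S})-d_i(S,\overline{S})=\lambda$. Writing this uniformly with the notation $d_i(S)$ meaning ``neighbours on the same side as $v_i$'' recovers the stated condition $d_i(S)-d_i(S,\overline{S})=\lambda$ for every vertex. For the Laplacian case (part \ref{Prop14-2}), I would use $L=D-A$, so $(Lp_S)_i$ differs from the signless computation only in the sign of $A$; for $v_i\in S$ this gives $d_i(G)-(d_i(S)-d_i(S,\overline{S})) = 2d_i(S,\overline{S})$, which must equal $\mu$, forcing $d_i(S,\overline{S})=\mu/2$ for all vertices, and this is exactly the statement that every vertex has degree $\mu/2$ in the edge-induced bipartite subgraph $G[E(S,\overline{S})]$.

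The main technical point to be careful about is bookkeeping the three quantities $d_i(G)$, $d_i(S)$ (neighbours on the same side), and $d_i(S,\overline{S})$ (neighbours on the opposite side) and verifying the sign flips consistently when $v_i$ lies in $\overline{S}$ rather than $S$; the weighted case requires interpreting each $d_i$ as a sum of edge weights rather than a count, but the computation is otherwise identical. I do not expect a genuine obstacle, since each direction reduces to reading off the eigenvalue equation $Np_S = \sigma p_S$ coordinate by coordinate; the content is entirely in recognizing that the per-vertex condition obtained is precisely a regularity condition on the relevant induced (or edge-induced) subgraph. The only subtlety is ensuring the ``if'' direction is stated correctly: given the regularity hypotheses, one must check that the single scalar $\sigma$ works simultaneously for all coordinates, which the computation confirms since the entry value is forced to be constant across each side.
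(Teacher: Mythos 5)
Your proposal is correct and follows essentially the same route as the paper's proof: a direct computation of $Qp_S$, $Lp_S$, and $Ap_S$ (the paper organizes it in block-matrix form with blocks $A_S$, $A_{\overline{S}}$, $B$, while you do it entry by entry, which is the same calculation), followed by the degree identity $d_i(G)=d_i(S)+d_i(S,\overline{S})$ to reduce each coordinate to $2d_i(S)$, $2d_i(S,\overline{S})$, or $d_i(S)-d_i(S,\overline{S})$, and then reading off the eigen-equation. Your closing remarks on the reversibility of the computation and on interpreting $d_i(S)$ as ``same-side degree'' for vertices in $\overline{S}$ match the paper's treatment.
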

\begin{proof}
Let $p_S=[e_{n_1}^t,-e_{n_2}^t]^t$ be the partition vector associated to $S,$ where $n_1=|S|$ and $n_2 = |\overline{S}|.$ Write $B$ for the matrix with entries  $b_{ij} = 1$ if $\{v_i,v_j\} \in E(S,\overline{S})$ and $b_{ij}= 0,$ otherwise. 
First, suppose that $p_{S}$ is an eigenvector of $G$ associated to the Q-eigenvalue $q.$ So, 
\begin{eqnarray*}
Qp_S  =  (A+D)p_S   
	  &=&  \left( 
	 \begin{array}{cc}
	 A_{S} + D_{S}&B\\
	 B^{T}&A_{\overline{S}} + D_{\overline{S}}\\
	 \end{array}\right)
	 \left(  
	 \begin{array}{c}
	 e_{n_1}\\
	 -e_{n_2}\\
	 \end{array}\right)	    	 
	  \nonumber \\
	 & = &   \left(  
	 \begin{array}{c}
	 D_{S}\; e_{n_1} + A_{S}\; e_{n_1}-A_{\overline{S}} \; e_{n_2}\\
	 -D_{\overline{S}}\; e_{n_2}+ B e_{n_1} - A_{\overline{S}}\; e_{n_2}\\
	 \end{array}\right)	\nonumber \\
	 & = &   \left(  
	 \begin{array}{c}
	 d_{1}(G) + d_1(S) - d_1(S, \overline{S}) \\
	 \vdots \\
	 d_{n_1}(G) + d_{n_1}(S) - d_{n_1}(S, \overline{S})\\
	 d_{n_1+1}(S, \overline{S}) - d_{n_1+1}(G) - d_{n_1+1}(\overline{S})\\
	 \vdots \\
	 d_{n_1+n_2}(S, \overline{S}) - d_{n_1+n_2}(G) - d_{n_1+n_2}(\overline{S})\\
	 \end{array}\right).	\nonumber \\
\end{eqnarray*}
Notice that for every $v_i \in S$, $d_{i}(G) = d_{i}(S) + d_{i}(S,\overline{S}).$ Also, for every $v_i \in \overline{S},$ $d_{i}(G) = d_{i}(\overline{S}) + d_{i}(\overline{S},S).$ Then, 

\begin{equation}
    Qp_S  =  \left(  
	 \begin{array}{c}
	 2d_1(S)\\
	 \vdots \\
	 2d_{n_1}(S)\\
	 -2d_{n_1+1}(\overline{S})\\
	 \vdots \\
	 -2d_{n_1+n_2}(\overline{S})\\
	 \end{array}\right).	\label{eq:ult1} \\
\end{equation} 

Thus, from the eigen-equation $Q p_{S} = q\; p_{S}$ and using Equation (\ref{eq:ult1}), we obtain that 
\begin{eqnarray*}
d_{i}(S) &=& q/2, \mbox{for  } \; i = 1,\ldots,n_1 \\
d_{i}(\overline{S}) &=& q/2, \mbox{for  }\; i = n_1+1,\ldots,n_1+n_2.
\end{eqnarray*}
It implies that the induced subgraphs $G[S]$ and $G[\,\overline{S}\,]$ are $q/2-$regular and the proof of (i) is complete.

Now, suppose that $p_{S}$ is an eigenvector of $G$ associated to the L-eigenvalue $\mu.$ So, 
\begin{eqnarray}
L \; p_S  =  (D-A)p_S   
	  &=&  \left( 
	 \begin{array}{cc}
	 D_{S} - A_{S} & -B\\
	 -B^{T}& D_{\overline{S}} - A_{\overline{S}} \\
	 \end{array}\right)
	 \left(  
	 \begin{array}{c}
	 e_{n_1}\\
	 -e_{n_2}\\
	 \end{array}\right)	    	 
	  \nonumber \\
& = &   \left(  
	 \begin{array}{c}
	 D_{S} e_{n_1} - A_{S} e_{n_1} + B e_{n_2} \nonumber \\
	 -B^{T}e_{n_1} - D_{\overline{S}} e_{n_2} + A_{\overline{S}}\; e_{n_2} \nonumber \\
	 \end{array}\right)	\nonumber \\
& = &   \left(  
	 \begin{array}{c}
	 d_{1}(G) - d_1(S) + d_1(S,\overline{S}) \nonumber \\
	 \vdots \\
	 d_{n_1}(G) - d_{n_1}(S) + d_{n_1}(S, \overline{S}) \nonumber \\
	 -d_{n_1+1}(\overline{S}, S) - d_{n_1+1}(G) +  d_{n_1+1}(\overline{S}) \nonumber \\
	 \vdots \\
	 -d_{n_1+n_2}(\overline{S}, S) - d_{n_1+n_2}(G) - d_{n_1+n_2}(\overline{S}) \nonumber \\
	 \end{array}\right) 
\end{eqnarray}

 Notice that for every $v_i \in S$, $d_{i}(G) = d_{i}(S) + d_{i}(S,\overline{S}).$ Also, for every $v_i \in \overline{S},$ $d_{i}(G) = d_{i}(\overline{S}) + d_{i}(\overline{S},S).$ Then,
\begin{equation}
    L \; p_S  = \left(  
	 \begin{array}{c}
	 2d_1(S,\overline{S}) \nonumber \\
	 \vdots \\
	 2d_{n_1}(S, \overline{S})\nonumber \\
	 -2d_{n_1+1}(\overline{S},S) \nonumber \\
	 \vdots \\
	 -2d_{n_1+ n_2}(\overline{S}, S)  \\
	 \end{array}\right). \label{eq:ult2} 
\end{equation}

Thus, from the eigenequation $L p_s = \mu\; p_{S},$ and using Equation (\ref{eq:ult2}), we get that 
the graph induced by the edges $E(S,\bar{S})$ is $\frac{\mu}{2}$-regular. The proof of (ii) is complete.

Now, suppose that $p_{S}$ is an eigenvector of $G$ associated to the A-eigenvalue $\lambda.$ We get that 

\begin{equation}
    A p_{S} = \left( \begin{array}{cc}
	 A_{S} & B \\
	 B^{T} & A_{\overline{S}}\\
	 \end{array}\right) 
\left(\begin{array}{c}
e_{n_1}\\
-e_{n_2}
\end{array}\right) = \left( \begin{array}{c} 
                          d_{1}(S) - d_{1}(S,\overline{S}) \\
                          \vdots \\
                          d_{n_1}(S) - d_{n_1}(S,\overline{S}) \\
                          d_{n_1+1}(\overline{S}, S) - d_{n_1+1}(\overline{S}) \\
                          \vdots \\
                          d_{n_1+n_2}(\overline{S}, S) - d_{n_1+n_2}(\overline{S})
                     \end{array} \right). \label{eq:ult3}
\end{equation}

From the eigenequation $A p_S = \lambda p_{S}$ and using Equation (\ref{eq:ult3}), we get that 
$d_{i}(S)-d_{i}(S, \overline{S}) =\lambda$ for all vertex $v_{i} \in S$ and $d_{i}(\overline{S})-d_{i}(S, \overline{S}) =\lambda$ for all vertex $v_{i} \in \overline{S}.$ It completes the proof of the theorem.
\end{proof}

Next, we present examples of graphs and some of their subgraphs that attain Theorem \ref{Prop14}.

\vspace{0.2cm}

\noindent \textbf{Example 1} Let $G$ be the graph displayed in  Figure \ref{Figure5} and let $S=\{v_1,v_2,v_3,v_4\}.$ Denote by  $e_{ij} = \{v_i,\, v_j\}$ the edges in $G[S]$ and by $f_{ij} = \{u_{i}, \, u_{j}\}$ the edges of $G[\overline{S}].$
\begin{figure}[h]
\centering
\includegraphics[scale=1]{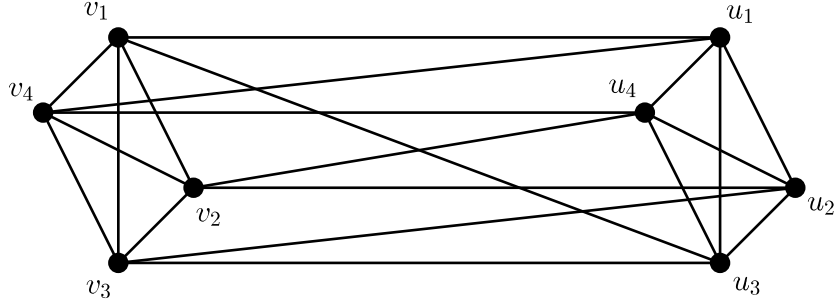}
\caption{Graph $G$ with one $L$- and $Q$-eigenvalue associated to an eigenvector with all entries equal to $\pm 1.$}
\label{Figure5}
\end{figure}
Note that $G[S]$ and $G[\overline{S}]$ are 3-regular. By Theorem \ref{Prop14}, $G$ has $p_{S}^{v_i}=1$ and $p_{S}^{u_i}=-1$ for all $i=1,2,3,4$ as an eigenvector associated to the $Q$-eigenvalue $q = 6.$ Removal of edges from $E(S,\overline{S})$, except one, does not change the $Q$-eigenvalue 6 and its corresponding eigenvector. Further, as $G[E(S,\overline{S})]$ is 2-regular, $\mu = 4$ is an $L$-eigenvalue of $G$ with the same eigenvector $p_{S}^{v_i}=1$ and $p_{S}^{u_i}=-1.$ The removal of edges from the induced subgraphs $G[S]$ or $G[\overline{S}]$ does not change $\mu=4$ and its corresponding eigenvector.

\vspace{0.3cm}

\noindent \textbf{Example 2:} Let $G$ be the graph displayed in Figure \ref{Figure6}. Let 
$V=\{v_1,v_2,v_3,v_4,\\v_5,v_6\}$ and  $U=\{u_1,u_2,u_3,u_4,u_5,u_6\}$. 
\begin{figure}[h]
\centering
\includegraphics[scale=0.15]{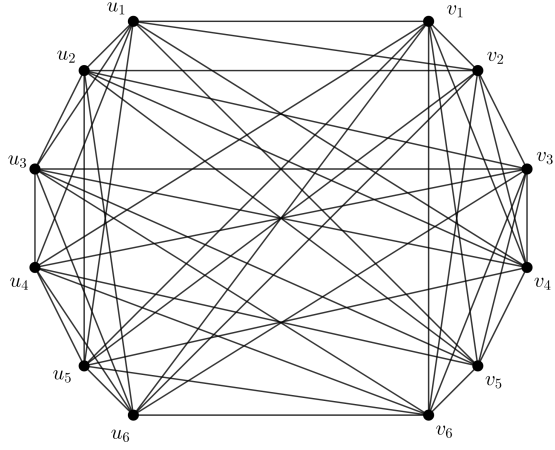}
\caption{Graph $G^{\prime}$ where an eigenvalue has an eigenvector with all entries equal to $\pm 1.$}
\label{Figure6}
\end{figure}
Note that $d_{v_i}(G^{\prime}[V])-d_{v_i}(G^{\prime}[V,U]) =0$ for all vertex $v_{i} \in V$ and  $d_{u_i}(G^{\prime}[U])-d_{u_i}(G^{\prime}[V,U]) =0$ for all vertex $u_{i} \in U.$ From Theorem \ref{Prop14}, $G^{\prime}$ has $p_{S}^{v_i}=1$ and $p_{S}^{u_i}=-1$ for all $i=1, \ldots, 6$ as an eigenvector associated to the eigenvalue $\lambda = 0.$ Let $C_1=u_1u_2v_5v_1$, $C_2=u_1u_2v_4v_3u_3u_4v_6v_5$ and $C_3=u_5u_6v_2v_1$ be cycles of $G^{\prime}$. If we delete the edges of any of those cycles from $G^{\prime}$, $\lambda=0$ is still an eigenvalue of the obtained graph. 
Table \ref{tab:Aeig} presents the deletion of some edges and the correspondent position of the eigenvalue $\lambda = 0$ of $A$ for the obtained graph. In all cases, the eigenvector associated to eigenvalue zero is $p_{S}.$
\begin{table}[h]
\centering
\begin{tabular}{cc}
\hline
List of deleted edges & $i$-th position of the eigenvalue $\lambda_{i} = 0$ \\ \hline 
$\emptyset$ & $\lambda_5=0$ \\ 
$E(C_1)$ &  $\lambda_5=0$ \\ 
$E(C_2)$ &  $\lambda_6=0$ \\ 
$E(C_3)$ &  $\lambda_5=0$ \\ 
$E(C_1)\cup E(C_3)$ &  $\lambda_6=0$ \\ 
$E(C_2)\cup E(C_3)$ &  $\lambda_6=0$\\
\hline
\end{tabular}
\caption{Deletion of some edges of the graph $G$ and the position of the eigenvalue $0$ having $p_{S}$ as an eigenvector.}
\label{tab:Aeig}
\end{table}
Now, let $e_{ij}=v_iv_j$ be the edges of $G[V]$, let $f_{ij}=u_iu_j$ be the edges of $G[U]$ and let $g_{ij}=v_iu_j$ be the edges in $G[E(V,U)].$ Write  $C_4=u_1u_4u_5$, $C_5=u_2u_3u_6$, $C_6=v_1v_2v_3v_4v_5v_6$ and  $C_7=u_1v_1u_4v_6u_5v_2u_6v_3u_3v_4u_2v_5$ as cycles of $G.$ Table \ref{tab:Aeig2} is built by using Theorem \ref{Prop14} and presents the deletion of some edges of $G$ and the correspondent position of the eigenvalues with $p_{S}$ as an eigenvector. 

\begin{table}[h]
\centering
\begin{tabular}{cc}
\hline
List of deleted edges & Eigenvalue of $A$ \\ \hline 
None & $\lambda_5=0$ \\ 
$e_{12},e_{34},e_{56},f_{12},f_{34},f_{56},$ &  $\lambda_8=-1$ \\ 
$E(C_4)\cup E(C_5)\cup E(C_6) $ &  $\lambda_9=-2$ \\ 
$g_{11},g_{22},g_{33},g_{45},g_{54},g_{66}$ &  $\lambda_3=1$ \\ 
$E(C_7)$ &  $\lambda_2=2$\\
\hline
\end{tabular}
\caption{Deletion of some of edges of $G^{\prime}$  and the corresponding eigenvalue having the partition vector $p_{S}$ as an eigenvector}
\label{tab:Aeig2}
\end{table}
\vspace{0.5cm}

\newpage

The following corollary states necessary conditions to a graph $G$ be $L$- or  $Q$-exact graph.

\begin{corollary}\label{Coro} Let $G$ be a graph on $n$ vertices and let $S\subset V$ such that $cut_G(S)= mcut(G)$. Then,
\begin{enumerate}
\item[(i) ] if $G$ is $Q$-exact, the graph $G[S]+G[\overline{S}]$ is $\frac{q_1}{2}$-regular;
\item[(ii) ] if $G$ is $L$-exact, the graph  $G[E(S,\bar{S})]$ is $\frac{\mu_1}{2}$-regular.
\end{enumerate}
\end{corollary}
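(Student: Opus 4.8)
The plan is to invoke the equality case of the Rayleigh--Ritz characterization of extreme eigenvalues: a nonzero vector whose Rayleigh quotient equals the smallest (respectively largest) eigenvalue of a Hermitian matrix must be an eigenvector for that eigenvalue. Concretely, I will show that the optimal partition vector $p_S$ is forced to be an eigenvector of $Q$ (respectively $L$), and then read off the regularity conclusion directly from Theorem \ref{Prop14}. The two items are perfectly analogous, so I will carry them out in parallel; the only differences are which matrix is involved and whether the relevant eigenvalue is the smallest or the largest.

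For item (i), fix $S$ with $cut_G(S)=mcut(G)$, so that $coh_G(S)=mcoh(G)=W-mcut(G)$. By identity (\ref{sl2}) we have $p_S^tQp_S=4\,coh_G(S)=4(W-mcut(G))$, and since $p_S^tp_S=n$ the Rayleigh quotient of $p_S$ equals $\frac{4}{n}(W-mcut(G))$. The hypothesis that $G$ is $Q$-exact says precisely that this value equals the smallest eigenvalue $q_n(G)$. Hence $p_S$ attains the minimum of the Rayleigh quotient of $Q$; expanding $p_S$ in an orthonormal eigenbasis of $Q$ shows that every component outside the $q_n(G)$-eigenspace must vanish, so $p_S$ is an eigenvector of $Q$ associated with $q_n(G)$. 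Theorem \ref{Prop14}(\ref{Prop14-1}) then gives that $G[S]$ and $G[\overline{S}]$ are both $\frac{q_n(G)}{2}$-regular, whence their disjoint union $G[S]+G[\overline{S}]$ is $\frac{q_n(G)}{2}$-regular.

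Item (ii) is handled identically after replacing $Q$ by $L$, cohesion by cut, and the minimum by the maximum. Using the standard Laplacian identity $p_S^tLp_S=4\,cut_G(S)=4\,mcut(G)$, the Rayleigh quotient of $p_S$ is $\frac{4}{n}mcut(G)$, which by $L$-exactness equals the largest eigenvalue $\mu_1(G)$; thus $p_S$ is an eigenvector of $L$ for $\mu_1(G)$, and Theorem \ref{Prop14}(\ref{Prop14-2}) forces $G[E(S,\overline{S})]$ to be $\frac{\mu_1(G)}{2}$-regular. The only step that is more than bookkeeping is the equality case of Rayleigh--Ritz used to upgrade ``$p_S$ attains the extreme Rayleigh value'' to ``$p_S$ is an eigenvector'', and I expect this to be the main (albeit mild) obstacle; everything else reduces to the quadratic-form identities in (\ref{sl2}) and its Laplacian counterpart together with a direct appeal to Theorem \ref{Prop14}.
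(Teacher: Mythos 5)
Your proof is correct and takes essentially the same route as the paper: exactness forces the Rayleigh quotient of $p_S$ to equal the extreme eigenvalue, the equality case of Rayleigh--Ritz upgrades $p_S$ to an eigenvector of $Q$ (resp.\ $L$), and Theorem \ref{Prop14} then yields the regularity claim; the paper merely compresses the Rayleigh--Ritz step that you spell out. One remark: your conclusion that $G[S]+G[\overline{S}]$ is $\frac{q_n(G)}{2}$-regular is the right one --- the $\frac{q_1}{2}$ appearing in the statement (and in the paper's proof) is an internal typo, since the eigenvalue attached to $p_S$ there is $q_n(G)$, not $q_1(G)$.
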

\begin{proof}
Let $p_S$ be the partition vector associated to the maximum cut $S$. 
Let $G$ be a $Q$-exact graph. From Proposition \ref{Prop3}, $p_{S}$ is an eigenvector to $q_{n}(G).$ So, 
$$\frac{p_S^tQp_S}{n} = q_n(G)=\frac{4}{n}(W-mcut(G)).$$ Therefore, from  Theorem \ref{Prop14}(i), $G[S]+(G-S)$ is $\frac{q_1}{2}$-regular, and the proof of (i) is completed.
Let $G$ be a $L$-exact. From, the Rayleigh-Ritz Theorem, it is clear that $p_{S}$ is an eigenvector to $\mu_{1}(G),$ which implies that 
$$\frac{p_S^tLp_S}{n} = \mu_1(G) =\frac{4}{n}mcut(G).$$ Therefore, according to  Theorem \ref{Prop14}(ii), $G[E(S,\bar{S})]$ is $\frac{\mu_1}{2}$-regular, and the proof follows.

\end{proof}

If $G$ is a non-regular graph, then $A-$, $Q-$ and $L-$ exactness of the graph are mutually exclusive. Thus,  more than one graph exactness may only happen in regular graphs as stated by Corollary \ref{Prop14cor} below.

\begin{corollary}\label{Prop14cor}
Let $G$ be a graph with $n$ vertices and $N_1,N_2,N_3 \in \{A,L,Q\}$ pairwise distinct matrices. Then, $G$ is $N_1$-exact and $N_2$-exact if and only if $G$ is regular and $N_3$-exact.
\end{corollary}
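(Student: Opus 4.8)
The plan is to reduce the whole statement to two facts: first, that for a \emph{regular} graph the three notions of exactness collapse to a single condition, and second, that satisfying any two of them simultaneously already forces $G$ to be regular. Granting these, both implications follow at once. If $G$ is regular and $N_3$-exact, then by the collapse it is automatically $N_1$- and $N_2$-exact; conversely, $N_1$- and $N_2$-exactness force regularity, and then the collapse yields the third exactness as well. So I would isolate and prove these two facts and then assemble them for every choice of pairwise distinct $N_1,N_2,N_3 \in \{A,L,Q\}$.

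For the regular case I would fix $r$ with $G$ being $r$-regular, recall that $W = nr/2$, and use the spectral shifts $\lambda_i(G) = q_i(G) - r = r - \mu_{n+1-i}(G)$ already invoked at the end of the proof of Theorem \ref{Prop7}. Substituting these into the three defining equalities of the Definition, each of $\mu_1 = \tfrac{4}{n}mcut(G)$, $q_n = \tfrac{4}{n}(W - mcut(G))$, and $\lambda_n = \tfrac{2}{n}(W - 2\,mcut(G))$ simplifies, using $W = nr/2$, to the single identity $r - \lambda_n = \tfrac{4}{n}mcut(G)$. Hence in the regular case $L$-, $Q$- and $A$-exactness are pairwise equivalent, which supplies the $(\Leftarrow)$ direction in full and closes the $(\Rightarrow)$ direction once regularity is known.

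The heart of the argument is showing that two exactness properties force regularity. Let $S$ attain $mcut(G)$ and let $p_S$ be its partition vector. Equality in the relevant Rayleigh bound (Propositions \ref{Prop3} and \ref{Prop2}, and the analogous $L$-bound giving $\mu_1 \geq \tfrac{4}{n}mcut(G)$) makes $p_S$ an eigenvector for the corresponding extreme eigenvalue, so Theorem \ref{Prop14} and Corollary \ref{Coro} translate each exactness into a degree condition. Writing $d_i^{\mathrm{in}}$ for the degree of $v_i$ inside its own part and $d_i^{\mathrm{cr}}$ for its number of cross edges, $Q$-exactness gives $d_i^{\mathrm{in}} = q_n/2$ for every $i$, $L$-exactness gives $d_i^{\mathrm{cr}} = \mu_1/2$ for every $i$, and $A$-exactness gives $d_i^{\mathrm{in}} - d_i^{\mathrm{cr}} = \lambda_n$ for every $i$. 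Since $d_i(G) = d_i^{\mathrm{in}} + d_i^{\mathrm{cr}}$, any two of these three conditions determine both $d_i^{\mathrm{in}}$ and $d_i^{\mathrm{cr}}$ as constants independent of $i$, and therefore make $d_i(G)$ constant; so $G$ is regular. I would record this as a one-line substitution for each of the pairs $\{A,L\}$, $\{A,Q\}$, $\{L,Q\}$.

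The main obstacle, and the step deserving the most care, is this translation from ``exact'' to the constant-degree statements: one must argue that equality in each spectral bound really forces $p_S$ to be an eigenvector for the \emph{extreme} eigenvalue, not merely for some eigenvalue, so that Theorem \ref{Prop14} applies with the correct eigenvalue, and one must keep track of which of the inner and cross degrees is held constant in each case. Once that bookkeeping is fixed, both the regularity deduction and the regular-case collapse are routine, and combining them establishes the claimed equivalence for every admissible choice of $N_1,N_2,N_3$.
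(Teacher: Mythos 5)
Your proposal is correct and follows essentially the same route as the paper's proof: both translate each exactness property, via the Rayleigh equality cases and Theorem~\ref{Prop14} (Corollary~\ref{Coro}), into a constant inner-degree, cross-degree, or degree-difference condition, observe that any two of these force all degrees $d_i = d_i^{\mathrm{in}} + d_i^{\mathrm{cr}}$ to be constant, and then use the regular-graph spectral shifts $\lambda_n = q_n - r = r - \mu_1$ to transfer exactness to the third matrix. The only difference is organizational: you isolate the two facts (regularity from any two exactness conditions; collapse of all three notions for regular graphs) and assemble them uniformly, whereas the paper runs the three pairs as separate cases and declares the converse immediate.
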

\begin{proof}
The proof follows into three cases:
\begin{enumerate}
\item[(i) ] Let $G$ be $L-$ and $Q$-exact. By Corollary \ref{Coro}, $G[E(S,\overline{S})]$ and  $G[S]+G[\overline{S}]$ are both regular. Therefore, the graph $G$ isomorphic to  $G[E(S,\overline{S})]+(G[S]+G[\overline{S})$ is regular, which implies that $G$ is also $A$-exact.
\item[(ii) ] Let $G$ be $L$-exact and $A$-exact. By  Corollary \ref{Coro} and Theorem \ref{Prop14}, we can assure that $G[E(S,\overline{S})]$ is $s$-regular and $d_i(G[S]+G[\overline{S})-d_i(G[E(S,\overline{S}))=\lambda$ for all $i \in V(G).$ Since 
$$d_i(G) = d_i(G[S]+G[\overline{S})-d_i(G[E(S,\overline{S})]+2d_i(G[E(S,\overline{S})]) = \lambda+2s$$ for each $i=1,\ldots,n$,  we have that $G$ is $(\lambda+2s)$-regular, which implies that $G$ is also $Q$-exact.
\item[(iii) ] Let $G$ be $Q$- and $A$-exact. By  Corollary \ref{Coro}, $G[S]+G[\overline{S}]$ is $r$-regular and $d_{i}(G[S]+G[\overline{S}])-d_{i}(G[E(S,\overline{S})]) = \lambda$ for all $i \in V(G)$. Since $$d_i(G) = d_i(G[E(S,\overline{S})])-d_i(G[S]+G[\overline{S}]) +2 d_i(G[S]+G[\overline{S}])=(2r-\lambda),$$ we get that $G$ is $(2r-\lambda)$-regular. It implies that $G$ is also $L$-exact. 
\end{enumerate}
The reciprocal is immediate and the proof is complete.
\end{proof}

\begin{corollary}\label{Coro2} Let $G$ be a graph on $n$ vertices and let $p_S$ be the partition vector associated to $S\subset V$ such that $cut_G(S) = mcut(G)$. 
\begin{enumerate}
\item[(i) ] If $\,G[E(S,\overline{S})]$ is $r$-regular and $2r\geq\mu_1(G[S]+G[\overline{S}])+\mu_2(G[E(S,\overline{S})]),$ then $G$ is a $L$-exact graph.
\item[(ii) ] If $\,G[S]+G[\overline{S}]$ is $r$-regular and $2r\leq q_n(G[S]+(G-S))+q_{n-1}(G[E(S,\overline{S})]),$ then $G$ is a $Q$-exact graph.
\end{enumerate}
\end{corollary}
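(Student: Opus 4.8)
The plan is to handle both parts through one mechanism: decompose the relevant matrix of $G$ into its \emph{internal} and \emph{cut} parts, observe that $p_S$ is a common eigenvector of both summands, and then control the remaining spectrum on the invariant subspace $p_S^{\perp}$ by Weyl's inequality (Theorem~\ref{th:weyl}). For part (i), I would write $L(G)=M+N$ with $M=L(G[S]+G[\overline{S}])$ and $N=L(G[E(S,\overline{S})])$, which is legitimate because $G$ is the edge-disjoint union of its internal and cut edges and the Laplacian is additive over such a union. Since $p_S$ is constant on each of $G[S]$ and $G[\overline{S}]$ we get $Mp_S=0$, and since $G[E(S,\overline{S})]$ is $r$-regular (hence bipartite $r$-regular on parts $S,\overline{S}$) the vector $p_S$ is the $-r$ adjacency eigenvector of the cut graph, so $Np_S=2r\,p_S$. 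Thus $p_S$ is an eigenvector of $L(G)$ for $2r$, in agreement with Theorem~\ref{Prop14}(ii). From $\langle L p_S,p_S\rangle=\sum_{ij\in E}w_{ij}(p_S^i-p_S^j)^2=4\,cut_G(S)=4\,mcut(G)$ one gets $2r=\tfrac{4}{n}mcut(G)$, so proving $L$-exactness reduces to proving $\mu_1(G)=2r$. The Rayleigh quotient already gives $\mu_1(G)\geq \tfrac{p_S^{t}Lp_S}{n}=2r$ (the Mohar--Poljak direction of~(\ref{Eq1-Lmaxcut})), so the whole content is the reverse inequality $\mu_1(G)\leq 2r$.

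The reverse inequality is where the hypothesis enters. Because $p_S$ is an eigenvector of both $M$ and $N$, the orthogonal complement $p_S^{\perp}$ is invariant under $M$, $N$, and $M+N$, and the spectrum of $L(G)$ is the union of $\{2r\}$ with that of $(M+N)|_{p_S^{\perp}}$. I would then identify the top eigenvalues of the restrictions: since $0$ is the smallest eigenvalue of the positive semidefinite $M$, deleting the $p_S$-direction leaves $\lambda_1(M|_{p_S^{\perp}})=\mu_1(G[S]+G[\overline{S}])$; since $2r$ is the \emph{largest} eigenvalue of $N$ with eigenvector $p_S$, deleting that direction leaves $\lambda_1(N|_{p_S^{\perp}})=\mu_2(G[E(S,\overline{S})])$. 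Applying Weyl subadditivity (inequality~(\ref{Wein2}) with $i=j=1$) to the order-$(n-1)$ matrices on $p_S^{\perp}$ gives $\lambda_1\big((M+N)|_{p_S^{\perp}}\big)\leq \mu_1(G[S]+G[\overline{S}])+\mu_2(G[E(S,\overline{S})])\leq 2r$ by hypothesis. Hence $\mu_1(G)=\max\{2r,\ \lambda_1((M+N)|_{p_S^{\perp}})\}=2r$, completing part (i).

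Part (ii) is the mirror image with $L$ replaced by $Q$ and all inequalities reversed. Here $Q(G)=M'+N'$ with $M'=Q(G[S]+G[\overline{S}])$ and $N'=Q(G[E(S,\overline{S})])$; $r$-regularity of $G[S]+G[\overline{S}]$ gives $M'p_S=2r\,p_S$ with $2r$ now the \emph{largest} eigenvalue of $M'$, while identity~(\ref{sl2}) applied to the cut graph gives $\langle N'p_S,p_S\rangle=0$, hence $N'p_S=0$ by positive semidefiniteness (so $p_S$ sits at the bottom of the spectrum of $N'$). As in Proposition~\ref{Prop3}, $q_n(G)\leq \tfrac{p_S^{t}Qp_S}{n}=2r=\tfrac{4}{n}(W-mcut(G))$, so it suffices to show $q_n(G)\geq 2r$. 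On the invariant subspace $p_S^{\perp}$, removing the $p_S$-direction leaves $\lambda_{\min}(M'|_{p_S^{\perp}})=q_n(G[S]+G[\overline{S}])$ (since $2r$ was the top of $M'$) and $\lambda_{\min}(N'|_{p_S^{\perp}})=q_{n-1}(G[E(S,\overline{S})])$ (since $0$ was the bottom of $N'$). Weyl superadditivity of the smallest eigenvalue (inequality~(\ref{Wein1}) with $i=j=n-1$ on the restricted matrices) then yields $\lambda_{\min}((M'+N')|_{p_S^{\perp}})\geq q_n(G[S]+G[\overline{S}])+q_{n-1}(G[E(S,\overline{S})])\geq 2r$, so $q_n(G)=\min\{2r,\ \lambda_{\min}((M'+N')|_{p_S^{\perp}})\}=2r$, which is $Q$-exactness.

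The main obstacle, and the only step demanding genuine care, is the bookkeeping of \emph{which} eigenvalue of each summand survives the passage to $p_S^{\perp}$; this is precisely why the hypotheses feature $\mu_2$ and $q_{n-1}$ rather than $\mu_1$ and $q_n$. The structural fact that makes the scheme work is that $p_S$ is simultaneously an eigenvector of the internal and of the cut matrices, and extremal for \emph{opposite} ends of their spectra, so that $\mathrm{span}(p_S)$ splits off cleanly and Weyl's inequality need only be invoked on the complementary block. I would also check that the degenerate cases (no internal edges, or $2r$ being a repeated extremal eigenvalue) cause no trouble, since there the inequalities merely degenerate consistently.
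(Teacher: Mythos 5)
Your proposal is correct and takes essentially the same route as the paper: the same decomposition of $L$ (resp.\ $Q$) into the internal matrix $L_b=L(G[S]+G[\overline{S}])$ and the cut matrix $L_c=L(G[E(S,\overline{S})])$, the same identification of $p_S$ as a common eigenvector with $Lp_S=2r\,p_S$, and the same bound $\mu_1(L_b)+\mu_2(L_c)\leq 2r$ controlling every direction orthogonal to $p_S$. The only difference is presentational: the paper proves the reverse inequality by contradiction, taking a unit eigenvector $\mathbf{u}$ for $\mu_1(G)>2r$ (implicitly orthogonal to $p_S$) and bounding $\mathbf{u}^tL_b\mathbf{u}+\mathbf{u}^tL_c\mathbf{u}$ by Rayleigh quotients, whereas you state the identical estimate directly as Weyl subadditivity on the invariant subspace $p_S^{\perp}$, and you write out part (ii), which the paper dismisses as ``similar arguments.''
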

\begin{proof}
Suppose that $G[E(S,\overline{S})]$ is $r$-regular and $2r\geq\mu_1(G[S]+G[\overline{S}])+\mu_2(G[E(S,\overline{S})]).$ Let $L_c$ be the Laplacian matrix of $G[E(S,\bar{S})]$ and $L_b$ the Laplacian matrix of $G[S]+G[\overline{S}].$ Since $G[E(S,\overline{S})]$ is $r$-regular, from Theorem \ref{Prop14}(ii), $2r$ is an $L$-eigenvalue of $G$ with eigenvector $p_S.$
\noindent  Assume that $\mu_1(G)  > 2r$ and take $\textbf{u}$ as an eigenvector to $\mu_1(G)$ such that $\|\mu_1\|=1.$  
So,  
\begin{center}
\begin{tabular}{ r c l }
 $\mu_1(G[S]+G[\overline{S}])+\mu_2(G[E(S,\overline{S})])$&$=$&$\max_{\Vert x \Vert =1}x^tL_bx+\max_{\begin{tiny}
 \begin{array}{c}
\Vert x \Vert =1 \\
x^tp_S=0 
 \end{array}
 \end{tiny}}x^tL_cx$\\
&$\geq$&$\textbf{u}^t L_b \textbf{u} + \textbf{u}^t L_c \textbf{u}$ \\
&$=$&$\textbf{u}^t L \textbf{u}$ \\
&$=$&$\mu_1(G) > 2r$ \\
& $\geq$ & $\mu_1(G[S]+G[\overline{S}])+\mu_2(G[E(S,\overline{S})]),$\\
\end{tabular}
\end{center}
which is a contradicition. So, $\mu_1(G)=2r = \frac{4}{n}mcut(G) $ and the result (i) follows.
The proof of (ii) follows by using similar arguments of (i).
\end{proof}

\section{Infinity families of exact graphs}\label{infinityfamilies}

The join operation of the   graphs $G$ and $H$, denoted by $G \bigtriangledown H$, is the graph with vertex set  $V(G \bigtriangledown H)=V(G) \cup V(H)$ and edge set $ E(G \bigtriangledown H) = E(G) \cup E(H) \cup  \{uv:u \in V(G)$ and $v \in V(H)\}.$ In this section we present infinity families of join graphs which are $A$-, $L$- and $Q$-exact graphs. 

\begin{proposition} If $t > 1$ and $G = K_2 \bigtriangledown tK_2$, then $G$ is $Q$-exact but not $L$-exact nor $A$-exact.
\end{proposition}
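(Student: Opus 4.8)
The plan is to verify the $Q$-exactness directly using Theorem~\ref{Prop14}(i) together with the $Q$-exactness definition, and then to rule out $L$- and $Q$-exactness by exhibiting the relevant spectral obstructions. First I would set up the notation: let $G = K_2 \bigtriangledown tK_2$ with $n = 2t+2$ vertices, where the two vertices of $K_2$ (call them the \emph{apex} vertices) are joined to every one of the $2t$ vertices of $tK_2$. I would take the natural bipartition $S = V(K_2)$ and $\overline{S} = V(tK_2)$, which I expect to realize the maximum cut. To confirm this I would compute $mcut(G)$: every edge except the single edge inside $K_2$ and the $t$ edges inside $tK_2$ crosses the cut $\{S,\overline{S}\}$, so $cut_G(S) = 4t$ (the $2 \cdot 2t$ join edges), while the total number of edges is $W = 4t + t + 1 = 5t+1$; I would argue via a short case analysis (or via the $Q$-eigenvalue bound itself) that no bipartition beats $4t$, so $mcut(G) = 4t$.

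Next I would establish $Q$-exactness. With $S = V(K_2)$, the induced subgraph $G[S]$ is $K_2$, which is $1$-regular, and $G[\overline{S}] = tK_2$ is also $1$-regular; both are $\tfrac{q}{2}$-regular with $q = 2$. By Theorem~\ref{Prop14}(i) the partition vector $p_S$ is therefore an eigenvector of $Q$ with eigenvalue $q = 2$. I would then show this is in fact the \emph{smallest} $Q$-eigenvalue, i.e. $q_n(G) = 2$, and check it matches the $Q$-exact identity: $\tfrac{4}{n}(W - mcut(G)) = \tfrac{4}{2t+2}(5t+1 - 4t) = \tfrac{4(t+1)}{2(t+1)} = 2$. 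This confirms $q_n(G) = \tfrac{4}{n}(W-mcut(G))$, so $G$ is $Q$-exact. The one genuine obstacle here is justifying $q_n(G) = 2$ rather than merely that $2$ is \emph{some} eigenvalue; I would handle this either by citing the known $Q$-spectrum of $K_2 \bigtriangledown tK_2$ (the commented-out Stani\'c reference suggests the spectrum is available) or by a direct interlacing/Rayleigh-quotient argument showing no smaller $Q$-eigenvalue exists, using $t>1$.

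To rule out $L$-exactness and $A$-exactness, I would invoke Corollary~\ref{Prop14cor}: since $G$ is non-regular (the apex vertices have degree $2t+1$ while the $tK_2$ vertices have degree $3$, and these differ when $t>1$), the three exactness properties are mutually exclusive. Thus a non-regular $Q$-exact graph cannot be simultaneously $A$- or $L$-exact, and indeed by the remark following Corollary~\ref{Coro} (``$A$-, $Q$- and $L$-exactness are mutually exclusive'' for non-regular graphs) a single exactness already precludes the others. Here the hypothesis $t>1$ is precisely what guarantees non-regularity: when $t=1$ the graph $K_2 \bigtriangledown K_2 = K_4$ is regular, so the restriction $t>1$ is essential. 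I would close by noting that this non-regularity observation, combined with the already-established $Q$-exactness, immediately yields that $G$ is neither $L$-exact nor $A$-exact.

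I expect the main obstacle to be the spectral step of pinning down $q_n(G) = 2$ as the minimum; the regularity/non-regularity bookkeeping and the $mcut$ computation are routine, but certifying that no $Q$-eigenvalue dips below $2$ requires either an explicit spectrum or a careful minimization argument over all bipartitions feeding into Proposition~\ref{Prop3}.
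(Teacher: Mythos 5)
Your proposal is correct and follows essentially the same route as the paper: take $S=V(K_2)$, observe that $G[S]$ and $G[\overline{S}]$ are $1$-regular so that Theorem~\ref{Prop14}(i) makes $p_S$ a $Q$-eigenvector for the eigenvalue $2$, and then exclude $L$- and $A$-exactness via non-regularity (which is exactly where $t>1$ enters) together with Corollary~\ref{Prop14cor}. In fact you are more careful than the paper on the one delicate point: the paper's proof just asserts ``by Proposition~\ref{Prop3}, $G$ is $Q$-exact,'' silently assuming that $2$ equals $q_n(G)$, whereas you correctly flag that one must certify $q_n(G)=2$ (e.g.\ from the known $Q$-spectrum $\{2t+4,\,2t,\,4^{(t-1)},\,2^{(t+1)}\}$ of $K_2\bigtriangledown tK_2$, or an equivalent eigenvalue computation), since having $2$ as \emph{some} $Q$-eigenvalue with a $\pm1$ eigenvector does not by itself give exactness.
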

\begin{proof}
Let $S=\{v_1,v_2\}$ be a subset of $V(G)$ such that $d(v_1)=d(v_2)=2t+1$
and let $p_S$ be the partition vector of $S.$ The graph $G[S]+ G[\overline{S}]$ is a 1-regular graph and from  Theorem  \ref{Prop14}(ii), $2$ is a $Q$-eigenvalue with $p_S$ as an eigenvector.
By Proposition \ref{Prop3}, $G$ is a $Q$-exact graph. Since the graph  $G[E(S,\bar{S})]$ is a non-regular graph, by Corollary \ref{Prop14cor}, we  conclude that $G$ is not  $L$-exact or it is not $A$-exact. It may suggests $G$ can be $A$-exact or $L$-exact. However, from Corollary \ref{Prop14cor}, for $N_1,N_2 \in \{A,L\}$ pairwise distinct matrices, if it is $N_1$-exact and $Q$-exact, then $G$ is $N_2$-exact and regular. That is a contradiction since $G$ is non-regular. Therefore, $G$ is only a $Q$-exact graph.  
\end{proof}

\noindent Next result shows that a join of any two graphs of same order generate a $L$-exact graph.

\begin{proposition} 
\label{prop:Lexact} Let $H_1$ and $H_2$ be two graphs such that $|V(H_1)|=|V(H_2)| = n$. Then $G=H_1\bigtriangledown H_2$ is a $L$-exact graph.
\end{proposition}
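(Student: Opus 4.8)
The plan is to invoke Corollary~\ref{Coro2}(i) with the natural bipartition $S = V(H_1)$ and $\overline{S} = V(H_2)$, so that $|S| = |\overline{S}| = n$ and $G$ has $N = 2n$ vertices. The first step is to identify the two induced subgraphs appearing in that corollary. Because every vertex of $H_1$ is joined to every vertex of $H_2$ in $G = H_1 \bigtriangledown H_2$, the edge-induced bipartite subgraph $G[E(S,\overline{S})]$ is precisely the complete bipartite graph $K_{n,n}$, which is $n$-regular; thus the regularity hypothesis of Corollary~\ref{Coro2}(i) holds with $r = n$. At the same time, $G[S] + G[\overline{S}]$ is just the disjoint union of $H_1$ and $H_2$.

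The second step is to check the spectral inequality $2r \geq \mu_1(G[S]+G[\overline{S}]) + \mu_2(G[E(S,\overline{S})])$, i.e. $2n \geq \mu_1(H_1 + H_2) + \mu_2(K_{n,n})$. For the first summand, the Laplacian of a disjoint union is block diagonal, so $\mu_1(H_1 + H_2) = \max\{\mu_1(H_1), \mu_1(H_2)\}$; since each $H_i$ is a simple graph on $n$ vertices, its largest Laplacian eigenvalue is at most $n$, whence $\mu_1(H_1 + H_2) \leq n$. For the second summand, the Laplacian spectrum of $K_{n,n}$ is $\{2n,\, n^{(2n-2)},\, 0\}$, so $\mu_2(K_{n,n}) \leq n$. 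Adding the two estimates gives $\mu_1(H_1+H_2) + \mu_2(K_{n,n}) \leq n + n = 2n = 2r$, so the hypothesis is met and Corollary~\ref{Coro2}(i) immediately yields that $G$ is $L$-exact.

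As a sanity check one can also argue directly: by Theorem~\ref{Prop14}(ii) the $n$-regularity of $K_{n,n}$ makes $p_S$ an $L$-eigenvector for the eigenvalue $2n$, while the standard bound $\mu_1 \leq N$ for a graph on $N = 2n$ vertices forces $\mu_1(G) = 2n$; since the partition $S$ already cuts the $n^2$ edges of $K_{n,n}$ and $mcut(G) \leq \frac{N}{4}\mu_1(G) = n^2$ by~(\ref{Eq1-Lmaxcut}), equality holds and $G$ is $L$-exact. There is no serious obstacle in either route; the only points requiring care are the bound $\mu_1(H_i) \leq n$ for a simple graph on $n$ vertices (which follows from $L(H_i) = nI - J - L(\overline{H_i})$ on the orthocomplement of the all-ones vector) and the explicit value $\mu_2(K_{n,n}) = n$, both of which are routine.
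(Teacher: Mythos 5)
Your proof is correct and takes essentially the same route as the paper: both apply Corollary~\ref{Coro2}(i) to the bipartition $S=V(H_1)$, $\overline{S}=V(H_2)$, using that $G[E(S,\overline{S})]=K_{n,n}$ is $n$-regular and that $\mu_1(H_1+H_2)+\mu_2(K_{n,n})\leq n+n=2n$. If anything, your verification of the spectral hypothesis is cleaner than the paper's, whose displayed chain $2n>n+\frac{n}{2}\geq\mu_1(H_1+H_2)+\mu_2(K_{n,n})$ contains a slip (it would require $\mu_1(H_i)\leq n/2$, false e.g.\ for $H_i=K_n$), though the inequality $2n\geq\mu_1(H_1+H_2)+\mu_2(K_{n,n})$ that Corollary~\ref{Coro2}(i) actually needs does hold, exactly as you show.
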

\begin{proof}
Let $S = V(H_1)$ and $\overline{S} = V(H_2).$ Note that $G[E(S,\bar{S})]$ is $n$-regular and 
$$2n>n+\frac{n}{2}\geq \max\{\mu_1(H_1),\mu_1(H_2)\}+\mu_2(K_{n,n})=\mu_1(H_1+H_2)+\mu_2(K_{n,n}),$$
where $K_{n,n}$ denotes the complete bipartite graph with both parts of size $n$. The result follows from Corollary \ref{Coro2}(i).
\end{proof}

\begin{proposition}\label{prop:joinQ} Let $G=H_1\bigtriangledown H_2$, where $H_1$ and $H_2$ are $r$-regular graphs of order  $n_1$ and $n_2,$ respectively. If $min\{n_1,n_2\}\geq 2r$, then $G$ is a $Q$-exact graph.
\end{proposition}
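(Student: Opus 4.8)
The plan is to apply Corollary \ref{Coro2}(ii) to the natural bipartition $S = V(H_1)$, $\overline{S} = V(H_2)$. With this choice the ``internal'' graph $G[S] + G[\overline{S}]$ is the disjoint union $H_1 + H_2$, which is $r$-regular since each $H_i$ is; and the ``cross'' graph $G[E(S,\overline{S})]$ is the complete bipartite graph $K_{n_1,n_2}$, because in the join every vertex of $H_1$ is adjacent to every vertex of $H_2$. Hence the regularity hypothesis of Corollary \ref{Coro2}(ii) holds with regularity degree $r$, and it remains only to verify the eigenvalue inequality
$$2r \leq q_n(H_1+H_2) + q_{n-1}(K_{n_1,n_2}).$$

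To bound the right-hand side, first note that the signless Laplacian of any graph is positive semidefinite, since $\langle Q x, x\rangle = \sum_{ij\in E} w_{ij}(x_i+x_j)^2 \geq 0$; thus $q_n(H_1+H_2) \geq 0$. For the second term I would use that $K_{n_1,n_2}$ is bipartite, so its signless Laplacian and Laplacian share the same spectrum, and the (signless) Laplacian spectrum of the complete bipartite graph is the well-known multiset $\{\,n_1+n_2,\ n_1^{(n_2-1)},\ n_2^{(n_1-1)},\ 0\,\}$, which indeed has $n_1+n_2$ entries. Reading off the two smallest values gives least eigenvalue $0$ and second-smallest $q_{n-1}(K_{n_1,n_2}) = \min\{n_1,n_2\}$. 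Combining the two bounds, the right-hand side is at least $0 + \min\{n_1,n_2\} = \min\{n_1,n_2\}$, which is at least $2r$ by hypothesis; so the inequality holds and Corollary \ref{Coro2}(ii) gives that $G$ is $Q$-exact.

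The only genuine computation is identifying the second-smallest $Q$-eigenvalue of $K_{n_1,n_2}$ as $\min\{n_1,n_2\}$; after that the conclusion is immediate, and I expect no serious obstacle, since the hypothesis $\min\{n_1,n_2\}\geq 2r$ is tailored precisely so that the slack $q_n(H_1+H_2)\geq 0$ makes the eigenvalue inequality hold. For completeness, the facts above also feed the direct route behind the corollary: by Theorem \ref{Prop14}(i) the $r$-regularity of $H_1+H_2$ makes $p_S$ a $Q$-eigenvector of $G$ with eigenvalue $2r$, the inequality forces $2r$ to be the least eigenvalue $q_n(G)$, and Proposition \ref{Prop3} then identifies $cut_G(S)=n_1 n_2$ as the maximum cut, yielding $q_n(G)=\frac{4}{n}(W-mcut(G))$.
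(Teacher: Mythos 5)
Your proof is correct and takes essentially the same route as the paper's: choose $S=V(H_1)$, observe that $G[S]+G[\overline{S}]=H_1+H_2$ is $r$-regular and $G[E(S,\overline{S})]\cong K_{n_1,n_2}$, and verify the hypothesis of Corollary \ref{Coro2}(ii) via $q_n(H_1+H_2)\geq 0$ together with $q_{n-1}(K_{n_1,n_2})=\min\{n_1,n_2\}\geq 2r$. The only difference is that you spell out the facts the paper leaves implicit (positive semidefiniteness of $Q$, the bipartite identification of the $Q$- and $L$-spectra of $K_{n_1,n_2}$, and the closing check that $cut_G(S)$ is indeed the maximum cut), which makes your write-up a more complete version of the same argument.
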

\begin{proof}
Take $S=V(H_1).$ The graph $G[E(S,\overline{S})]$ is isomorphic to $K_{n_1,n_2}.$ By hypothesis, 
$$2r \leq q_n(H_1+H_2)+min\{n_1,n_2\}=q_n(H_1+H_2)+q_{n-1}(K_{n_1,n_2})$$
and from Corollary \ref{Coro2}(ii), $G$ is a $Q$-exact graph.
\end{proof}

\noindent \textbf{Remark 4.1} The join of any two $r$-regular graphs does not imply a $Q$-exact graph as in  Proposition \ref{prop:Lexact}. The graph  $K_{3,3}\bigtriangledown K_4$ is not a $Q$-exact graph.

\vspace{0.5cm}

The following result is a straightforward consequence of Proposition \ref{prop:joinQ}.
\begin{corollary}
Let $m,n\geq 4$ be two positive integers. Then, $C_m \bigtriangledown  C_n$ is a $Q$-exact graph.
\end{corollary}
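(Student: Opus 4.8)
The plan is to apply Proposition~\ref{prop:joinQ} directly, so the entire task reduces to checking that cycles fit its hypotheses. First I would recall that the cycle $C_k$ is a connected $2$-regular graph on $k$ vertices for every $k \geq 3$. Hence, setting $H_1 = C_m$ and $H_2 = C_n$, both factors are $r$-regular with the common value $r = 2$, and their orders are $n_1 = m$ and $n_2 = n$. This is exactly the shape of the hypothesis in Proposition~\ref{prop:joinQ}, where the two joined graphs must share the same regularity parameter; the fact that every cycle is $2$-regular is precisely what makes $C_m$ and $C_n$ admissible regardless of $m$ and $n$.

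Next I would verify the arithmetic side condition $\min\{n_1,n_2\} \geq 2r$. With $r = 2$ this reads $\min\{m,n\} \geq 4$, which is guaranteed by the hypothesis $m,n \geq 4$. Once this inequality is in place, Proposition~\ref{prop:joinQ} applies verbatim and yields that $G = C_m \bigtriangledown C_n$ is a $Q$-exact graph, completing the argument. Concretely, the proposition's internal bound $2r \leq q_n(C_m + C_n) + \min\{m,n\}$ becomes $4 \leq q_n(C_m + C_n) + \min\{m,n\}$, and since $q_n(C_m + C_n) \geq 0$ (the signless Laplacian is positive semidefinite) while $\min\{m,n\} \geq 4$, the inequality holds comfortably.

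There is essentially no obstacle here: the only point requiring the slightest care is the bookkeeping that the threshold $2r$ equals $4$ for cycles and that the lower bound $m,n \geq 4$ is exactly what meets this threshold, so the corollary is tight in the sense that the hypothesis $m,n \geq 4$ cannot be relaxed to $m,n \geq 3$ without leaving the scope of Proposition~\ref{prop:joinQ}. Accordingly, I would write the proof as a single short paragraph invoking Proposition~\ref{prop:joinQ} with $r = 2$, $n_1 = m$, and $n_2 = n$, noting that $\min\{m,n\} \geq 4 = 2r$ follows immediately from $m,n \geq 4$.
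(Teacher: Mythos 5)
Your proof is correct and takes exactly the paper's route: the paper states this corollary as a straightforward consequence of Proposition~\ref{prop:joinQ}, which is precisely your application with $H_1 = C_m$, $H_2 = C_n$, $r = 2$, and $\min\{m,n\} \geq 4 = 2r$. Your additional remarks (positive semidefiniteness of $Q$ and the tightness of the threshold $m,n \geq 4$) are accurate but not needed once the proposition's hypotheses are verified.
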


We also know that $K_6=C_3 \bigtriangledown  C_3$ is $Q$-exact graph, but $C_3 \bigtriangledown  C_n$ is not a $Q$-exact graph for any $n \geq 4$.

\begin{proposition}\label{prop:joinA} Let $H_1$ be a $r_1$-regular graph with $n_1$ vertices and $H_2$ be a $r_2$-regular graph with $n_2$ vertices. If $n_1-r_2=n_2-r_1>max\{r_1,r_2\}$, then $G=H_1\bigtriangledown H_2$ is an $A$-exact graph.
\end{proposition}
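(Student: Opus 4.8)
The plan is to reduce $A$-exactness to a single spectral statement and then verify it using the well-understood spectrum of a join of regular graphs. Set $S=V(H_1)$ and $\overline{S}=V(H_2)$, so that $n=n_1+n_2$ and $p_S=[e_{n_1}^t,-e_{n_2}^t]^t$. The key observation is that it suffices to prove that $p_S$ is an eigenvector of $A(G)$ associated to its \emph{least} eigenvalue $\lambda_n(G)$. Indeed, the chain in the proof of Proposition \ref{Prop2} shows
$$\lambda_n(G)\ \le\ \min_{S'\subseteq V}\frac{p_{S'}^tA p_{S'}}{n}\ =\ \frac{2}{n}\bigl(W-2\,mcut(G)\bigr),$$
while, if $p_S$ is an eigenvector for $\lambda_n(G)$, then $p_S^tAp_S/n=\lambda_n(G)$ is at least that same minimum; combining the two inequalities forces $\lambda_n(G)=\frac{2}{n}(W-2\,mcut(G))$, which is precisely the definition of an $A$-exact graph. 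In particular, the maximum cut never needs to be computed directly.

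First I would identify the eigenvalue attached to $p_S$. In the join, each $v_i\in S$ has $r_1$ neighbours inside $H_1$ and $n_2$ neighbours across in $H_2$, while each $v_i\in\overline{S}$ has $r_2$ neighbours inside $H_2$ and $n_1$ neighbours across in $H_1$. By Theorem \ref{Prop14}(iii), $p_S$ is an eigenvector of $A(G)$ exactly when the two resulting values $r_1-n_2$ and $r_2-n_1$ coincide, which is nothing but the equality hypothesis $n_1-r_2=n_2-r_1$; the associated eigenvalue is
$$\lambda:=r_1-n_2=r_2-n_1<0.$$

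The main work, and the step I expect to be the principal obstacle, is to show that $\lambda$ is the \emph{smallest} $A$-eigenvalue of $G$. Here I would use the equitable partition $\{V(H_1),V(H_2)\}$ of the join: writing $A(G)$ in block form with off-diagonal all-ones blocks, its spectrum splits into (i) the eigenvalues of $A(H_i)$ carried by eigenvectors orthogonal to $e_{n_i}$ (for $i=1,2$, since each such eigenvector extended by zeros is annihilated by the all-ones block), together with (ii) the two eigenvalues of the quotient matrix $\left(\begin{smallmatrix} r_1 & n_2\\ n_1 & r_2\end{smallmatrix}\right)$, one of which is $\lambda$ (with eigenvector $(1,-1)^t$, i.e.\ $p_S$) and the other $r_1+n_1=r_2+n_2>0$. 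As $H_i$ is $r_i$-regular, Gershgorin's theorem gives that every eigenvalue of $A(H_i)$ is at least $-r_i$. The strict inequality $n_1-r_2=n_2-r_1>\max\{r_1,r_2\}$ then yields $\lambda=r_1-n_2<-r_1$ and $\lambda=r_2-n_1<-r_2$, so $\lambda$ lies strictly below every eigenvalue of $A(H_1)$ and of $A(H_2)$ occurring in (i), and trivially below the positive quotient eigenvalue. Hence $\lambda=\lambda_n(G)$.

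Combining the two steps, $p_S$ is an eigenvector of $A(G)$ for $\lambda_n(G)$, so by the reduction of the first paragraph $G$ is $A$-exact, which completes the proof. The only delicate point is the spectral accounting for the join; once the decomposition into the interior eigenvalues and the $2\times 2$ quotient is in place, the role of each half of the hypothesis becomes transparent: the equality $n_1-r_2=n_2-r_1$ makes $p_S$ an eigenvector, and the strict inequality pushes its eigenvalue below the Gershgorin floor $-\max\{r_1,r_2\}$ of the remaining spectrum.
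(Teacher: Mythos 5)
Your proof is correct and takes essentially the same route as the paper's: both identify $p_S$ as an eigenvector for $\lambda=r_1-n_2$ via Theorem \ref{Prop14}, both decompose the spectrum of the join into the eigenvalues inherited from $H_1$ and $H_2$ plus the two quotient eigenvalues $r_1-n_2$ and $n_1+r_1$, and both use the fact that the inherited eigenvalues lie in $[-r_i,r_i]$ to conclude that $r_1-n_2$ is the least eigenvalue. The only differences are presentational: the paper quotes the known characteristic polynomial of a join of regular graphs where you re-derive that decomposition from the equitable partition, and you spell out the final implication (an eigenvector of $\lambda_n$ with entries $\pm 1$ forces equality in Proposition \ref{Prop2}, hence $A$-exactness) which the paper leaves implicit.
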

\begin{proof}
Take $S=V(H_1).$ Thus, $d_i(S)-d_i(S,\overline{S})=r_1-n_2$, for all $i \in S$, and $d_i(\overline{S})-d_i(S,\overline{S})=r_2-n_1=r_1-n_2$, for all $i \in \overline{S}$, then, by Theorem \ref{Prop14} $p_S$ is an eigenvector of $A$ associated to the eigenvalue $r_1-n_2$. Also, since $G$ is obtained by a join of two regular graphs, the characteristic polynomial of $G$ is given by

$$ p_G(x) = \frac{(x-(r_1-n_2))(x-(n_1+r_1))p_{H_1}(x)p_{H_2}(x)}{(x-r_1)(x-r_2)},$$
where $p_{H_i}(x)$ is the characteristic polynomial of the graph $H_i.$ Therefore, $(r_1-n_2)$, $(n_1+r_1)$, the roots of $H_1$ but $r_1$ and the roots of $H_2$ but $r_2$ belong to the spectrum of $G.$  It is well-known that all eigenvalues of $H_i$ are in $[-r_i,r_i]$, for $i=1,2$, then $(r_1-n_2)$ is the least eigenvalue of $A$ since $(r_1-n_2)<-max\{r_1,r_2\}\leq -r_i$, $i=1,2$. It implies that $G$ is an $A$-exact graph and the proof is complete.
\end{proof}

\vspace{0.1cm}

\begin{corollary} Let $H$ be a $r$-regular graph with $n$ vertices. Then $G=H\bigtriangledown (n+r)K_1$ is an $A$-exact graph.
\end{corollary}

\vspace{0.5cm}

\noindent \textbf{Acknowledgements:} The research of Leonardo de Lima was partly funded by the CNPq grant number 314298/2018--5. 

\section*{References}

\end{document}